\documentclass[12pt,a4paper,reqno]{amsart}
\usepackage[utf8x]{inputenc}
\usepackage[mathcal]{eucal}
\usepackage{amsmath,amssymb,amsthm}
\usepackage[nice]{nicefrac}
\usepackage[dvipsnames]{xcolor}
\usepackage[colorlinks=true, linkcolor=MidnightBlue, citecolor=OliveGreen]{hyperref}
\usepackage[normalem]{ulem}
\usepackage{mathrsfs}
\usepackage{mathtools}

\addtolength{\hoffset}{-1cm}
\addtolength{\textwidth}{2cm}
\addtolength{\voffset}{-1cm}
\addtolength{\textheight}{2cm}

\theoremstyle{plain}
\newtheorem{theorem}{Theorem}[section]
\newtheorem{lemma}[theorem]{Lemma}
\newtheorem{proposition}[theorem]{Proposition}

\theoremstyle{definition}
\newtheorem{definition}[theorem]{Definition}

\newtheorem{remark}[theorem]{Remark}

\numberwithin{equation}{section}

\DeclareMathOperator{\Aut}{Aut}
\DeclareMathOperator{\St}{St}

\DeclareMathOperator{\Rst}{Rst}

\DeclareMathOperator{\Sym}{Sym}
\DeclareMathOperator{\Norm}{N}

\DeclareMathOperator{\F}{\mathbb{F}}
\DeclareMathOperator{\rank}{rk}
\DeclareMathOperator{\N}{\mathbb N}
\DeclareMathOperator{\Z}{\mathbb Z}

\newcommand{\cref}[3][]{\hyperref[#3]{#2~\ref*{#3}#1}}

\title[Conjugacy classes of polyspinal groups]{Conjugacy classes of polyspinal groups}

\author[J.\,M. Petschick]{Jan Moritz Petschick} 
\address{Jan Moritz Petschick: Mathematisches Institut, Heinrich-Heine-Universit\"at, 40225
  D\"usseldorf, Germany} \email{Jan.Petschick@hhu.de}

\author[A. Thillaisundaram]{Anitha Thillaisundaram}  
\address{Anitha Thillaisundaram: Centre for Mathematical Sciences, Lund University,  223 62 Lund, Sweden}
\email{anitha.t@cantab.net}

\keywords{Branch groups, multi-GGS groups, spinal groups, conjugacy classes}
\subjclass[2010]{Primary  20E08;  Secondary 20E45}
\date{\today}
 
\begin{document}
\begin{abstract}
Spinal groups and multi-GGS groups are both generalisations of the well-known Grigorchuk-Gupta-Sidki (GGS-)groups. Here we
give a necessary condition for spinal groups to be conjugate, and we establish a necessary and sufficient condition for multi-GGS groups to be conjugate. 
We also introduce a natural common generalisation of both classes, which we call polyspinal groups.
Our results enable us to give a negative answer to a question of Bartholdi, Grigorchuk and \u{S}uni\'{k}, on whether every finitely generated branch group is isomorphic to a weakly branch spinal group.
\end{abstract}

\maketitle

\section{Introduction}

Let $m\in\N_{\ge2}$  and let $T=T_m$ be the $m$-adic tree. Groups acting on $m$-adic trees have received quite a bit of attention, especially in the case when $m$ is a prime. The interest in these groups is largely due to their nice structure, their importance in the theory of just infinite groups, and the fact that many such groups have exotic algebraic properties; we refer the reader to~\cite{BGS03} for a good introduction.

The (first) Grigorchuk group~\cite{Grigorchuk} was the first notable group acting on an $m$-adic tree that was constructed, and it continues to play a central role in the subject.
 It is a 3-generated infinite periodic group acting on the binary rooted tree~$T_2$ with many interesting properties. Its three generators include the rooted automorphism~$a$ which swaps the two maximal subtrees of $T_2$, and two directed automorphisms $\beta$ and $\gamma$, both of which stabilise the rightmost infinite ray of the tree. They are defined recursively as follows:
\[
\beta=(a,\gamma),\quad \gamma=(a,\delta),
\]
where for $x$ and $y$ automorphisms of $T_2$, the notation $(x,y)$ indicates the independent actions on the respective maximal subtrees, and
\[
\delta=(1,\beta).
\]
Soon after the Grigorchuk group was defined, other similar constructions followed, including the well-studied classes of Grigorchuk-Gupta-Sidki (GGS-)groups and 
\u{S}uni\'{k} groups (also called siblings of the Grigorchuk group).

These constructions were generalised to a natural class of so-called \emph{spinal groups}, 
which are generated by a group~$R$ of rooted automorphisms and a group~$D$ of directed automorphisms. Both are defined by restricting the area of the tree where the automorphisms act non-trivially; rooted automorphisms act only at the root of the tree, while directed automorphisms only act on a $1$-sphere around a constant path.
We refer the reader to \cref{Section}{sec:multi-spinal} for the precise definition. To any group~$D$ of directed automorphisms, there is an associated sequence of homomorphisms $\omega=(\omega_n)_{n\in\N}$ from $D$ to $\Sym(X)$, prescribing the action of $D$ on the different levels of the tree. This sequence fully determines the directed automorphisms. We write $\sigma^nR$ for the group generated by the images of the associated rooted automorphisms at level~$n$.
Note also that we can identify the vertices of the $m$-adic tree~$T$ with the elements of the free monoid~$X^*$, for the alphabet $X=\{0,1,\ldots,m-1\}$; see \cref{Section}{sec:prelim} for precise details.

Recently, Petschick~\cite{Moritz} identified the conjugacy classes of GGS-groups within the automorphism group of their respective trees. In this paper, we give a condition for spinal groups to be conjugate in the same sense.

\begin{theorem}[A necessary condition for spinal groups to be conjugate]\label{thm:spinal}
	Let $G$ and $\widetilde G$ be spinal groups with defining data $R,\, \omega$, respectively $\widetilde R,\, \widetilde\omega$, and let $f \in \Aut T$ be such that $\widetilde G^f = G$. Then there is an integer $N$ and an isomorphism $\iota: D \to \widetilde D$, such that for all $n \geq N$ there is:
	\begin{enumerate}
		\item an inner automorphism $\phi_n $ of $\St_{\Sym(X)}(0)$ such that $\phi_n (\sigma^n  \widetilde R) = \sigma^n R$,
		\item a tuple of inner automorphisms $\rho_n \in (\operatorname{Inn}(\sigma^n  \widetilde R))^{X\setminus\{0\}}$,
		\item an automorphism $\alpha_n $ of $\Sym(X)^{X\setminus\{0\}}$ permuting the direct factors by an element $\alpha' \in \St_{\Sym(X)}(0)$, 
	\end{enumerate}
	such that
	\[
		\omega_n = \phi_n^{\,X\setminus\{0\}} \circ \rho_n \circ \alpha_n \circ \widetilde \omega_n \circ \iota.
	\]
\end{theorem}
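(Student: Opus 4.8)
The plan is to recover the level-$n$ data by comparing sections of directed automorphisms under conjugation by $f$. For $g \in \Aut T$ and a vertex $v$ write $g|_v$ for the section of $g$ at $v$, and recall the section-of-a-conjugate identity
\[
	(g^{-1} h g)|_v = \bigl(g|_{g^{-1}hg(v)}\bigr)^{-1}\, h|_{g(v)}\, g|_v .
\]
I would apply this with $g = f$ and $h$ ranging over directed automorphisms, evaluated at the off-spine vertices $v = 0^n j$ with $j \in X\setminus\{0\}$, since there a directed automorphism of $G$ fixes $0^n j$ and has the rooted section recorded by $\omega_n$, and likewise the directed automorphisms of $\widetilde G$ carry the rooted sections recorded by $\widetilde\omega_n$.

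First I would pin down the behaviour of $f$ near the spine. Directed automorphisms are characterised by the support pattern of having directed sections along the spine and rooted sections off it, and this pattern is preserved under conjugation; hence $f$ must carry the spine of $\widetilde G$ onto the spine of $G$. Identifying both spines with $0^\infty$, this gives $f(0^n) = 0^n$, and the local action $\pi_n \in \St_{\Sym(X)}(0)$ of $f$ at the spine vertex $0^n$ satisfies $f(0^n j) = 0^n \pi_n(j)$ with $\pi_n(j) \neq 0$. Conjugation by the spine germ $(f|_{0^n})_{n}$ sends the shifted directed group of $\widetilde G$ onto that of $G$ and so induces the isomorphism $\iota : D \to \widetilde D$, which intertwines the two shift maps, $\iota\sigma = \widetilde\sigma\,\iota$.

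Feeding $v = 0^n j$ into the displayed identity, for every $\xi \in D$ one obtains
\[
	\omega_n(\xi)_j = c_{n,j}^{-1}\, \widetilde\omega_n(\iota\xi)_{\pi_n(j)}\, c_{n,j}, \qquad c_{n,j} := f|_{0^n j}.
\]
Since the left-hand side lies in $\Sym(X)$ and this holds for all $\xi$, the first-level sections of $c_{n,j}$ must be constant along the relevant orbits, so that conjugation by $c_{n,j}$ agrees on these rooted elements with conjugation by its root permutation $\bar c_{n,j} \in \Sym(X)$. Matching with the target factorisation, the re-indexing $j \mapsto \pi_n(j)$ is exactly the factor permutation of $\alpha_n$, and it remains to split each $\bar c_{n,j}$ as $r_{n,j}\,c^{(n)}$ with $r_{n,j} \in \sigma^n\widetilde R$ and a single $c^{(n)} \in \St_{\Sym(X)}(0)$ independent of $j$: the tuple of inner automorphisms by the $r_{n,j}$ is $\rho_n$, conjugation by $c^{(n)}$ is $\phi_n$, and $(c^{(n)})^{-1}\sigma^n\widetilde R\, c^{(n)} = \sigma^n R$ records $\phi_n(\sigma^n\widetilde R) = \sigma^n R$. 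Establishing this uniform splitting, namely that a single conjugator handles all off-spine positions at level $n$, is the first point requiring care, and I would obtain $c^{(n)}$ from the root permutation of the spine section $f|_{0^n}$.

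The main obstacle is to show that the factor permutation can be taken independent of $n$, i.e.\ that $\pi_n$ is eventually a fixed $\alpha' \in \St_{\Sym(X)}(0)$. For this I would compare the level-$n$ and level-$(n+1)$ relations using $\omega_{n+1} = \omega_n\circ\sigma$, $\widetilde\omega_{n+1} = \widetilde\omega_n\circ\widetilde\sigma$ and $\iota\sigma = \widetilde\sigma\iota$: substituting and cancelling forces the position-homomorphisms $\eta \mapsto \widetilde\omega_n(\eta)_{\pi_{n+1}(j)}$ and $\eta \mapsto \widetilde\omega_n(\eta)_{\pi_n(j)}$ to be conjugate for every $j$. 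Non-degeneracy of the directed data across distinct off-spine positions then pins $\pi_{n+1} = \pi_n$ for all large $n$, yielding the single $\alpha'$; when some positions carry conjugate data the residual ambiguity can be absorbed into $\rho_n$. The integer $N$ is the level beyond which this stabilisation holds and conjugation by $f$ genuinely maps directed automorphisms of $\widetilde G$ to directed automorphisms of $G$ on sections, so that the displayed identity is exact; controlling the finitely many lower levels, where conjugation may introduce corrections lying deeper in $G$, is the remaining technical step.
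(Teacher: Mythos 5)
Your outline matches the paper's strategy in broad strokes — compare labels at the off-spine vertices $0^{n-1}x$, read off $\iota$ from conjugation along the spine, and split each conjugating label as (an element of $\sigma^n\widetilde R$) times (a single label $f|^{0^n}$), which produces $\rho_n$, $\phi_n$ and $\alpha_n$ exactly as in the paper. But the argument has a genuine gap at its foundation: you assert that the directed support pattern ``is preserved under conjugation,'' so that $f$ carries spine to spine and conjugation by the spine sections carries $\widetilde D$ to $D$. This is precisely what is \emph{not} automatic. The hypothesis only gives $\widetilde G^f = G$ as groups; the conjugate $f^{-1}\tilde d f$ of a directed generator is some element of $G$, a priori a long word in rooted and directed generators, supported near the path $f^{-1}(\overline 0)$ — which need not be $\overline 0$, and the element need not lie in $D$. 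The paper's engine for closing this gap is the contraction-type statement that polyspinal groups are \emph{reducing} (Lemma~\ref{lem:mes reducing}: syllable length strictly drops under taking sections two levels down), combined with Lemma~\ref{lem:descent}: deep enough sections of $f^{-1}\tilde d f$ along $f(\overline 0)$ must land in the nuclear sets $\sigma^n\widetilde R\cup\sigma^n\widetilde D$, and since they stabilise the first layer they land in $\sigma^n\widetilde D$; symmetry then gives $\sigma^n D = (\sigma^n\widetilde D)^{f|_{0^n}}$ for $n\ge N$. This is the content of the integer $N$ in the statement, and you defer it as ``the remaining technical step'' without the key idea. Similarly, the uniform splitting $f|^{0^{n-1}x} = r_x\, f|^{0^n}$ with $r_x\in\sigma^n\widetilde R$, which you correctly flag as needing care, is the paper's Lemma~\ref{lem:cosets} and requires spherical transitivity plus fractality of the spinal group; it does not follow from the displayed conjugation identity alone.

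A secondary problem: your ``main obstacle'' — forcing the factor permutation $\pi_n$ to stabilise to a single $\alpha'$ independent of $n$ — is both unnecessary and unobtainable by the route you sketch. In the theorem the element $\alpha'$ is quantified inside ``for all $n\ge N$ there is,'' and indeed the paper takes $\alpha_n = p(f|^{0^{n-1}})$, which varies with $n$; all that is required is that the permuting element fixes $0$. Moreover your stabilisation argument rests on the identity $\omega_{n+1}=\omega_n\circ\sigma$, which holds only for constant defining sequences (the multi-GGS case); for a general spinal group the $\omega_n$ are an arbitrary sequence of homomorphisms and satisfy no such recursion, so the cancellation you invoke is not available.
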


A subclass of spinal groups, resembling the GGS-groups more closely, have received increased attention in recent times. These are called \emph{multi-GGS groups}, which are generated by a rooted automorphism permuting the maximal subtrees cyclically, and the sequences defining their directed generators are constant, but – in contrast to the GGS-groups – they have possibly more than one directed generator. Up to relabelling the vertices of the tree, all multi-GGS groups on a fixed tree have the same rooted group $A_m = \langle (0\; 1\;\cdots \; m-1) \rangle \cong C_m$. These groups were first defined in~\cite{AKT}, where they were originally called multi-edge spinal groups. We prefer the term multi-GGS groups, as the original name can be easily confused with the term \emph{multispinal group}, which represents a more generalised family of groups generated by rooted and tree-like automorphisms, as defined in~\cite{SS}. We also consider the class of \emph{multi-EGS groups}, which allows for directed generators associated to different constant paths, and includes branch groups that do not have the congruence subgroup property. To deal with both the classes of multi-EGS and spinal groups simultaneously, we introduce a common natural generalisation, which we call \emph{polyspinal groups}; see \cref{Section}{sec:multi-spinal} for  details.

For multi-GGS groups, we are able to provide a necessary and sufficient condition for the groups to be conjugate.

\begin{theorem}[A condition for multi-GGS groups to be conjugate]\label{thm:mGGS}
	Let $G$ and $\widetilde G$ be multi-GGS groups with defining data $\omega$ and $\widetilde \omega$ respectively. There is an element $f \in \Aut T$ such that $\widetilde G^f = G$ if and only if there exists
	\begin{enumerate}
		\item an automorphism $\alpha$ of $A_m^{\,m-1}$ permuting the direct factors by an element of $\Norm_{\Sym(X)}(A_m)\cap\St_{\Sym(X)}(0)$ and
		\item an isomorphism $\iota:D\to \widetilde D$ such that
	\end{enumerate}
	\[
		\omega = \alpha \circ\widetilde \omega \circ  \iota.
	\]
\end{theorem}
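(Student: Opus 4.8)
The plan is to derive the multi-GGS statement as a sharpening of the necessary condition already established in Theorem~\ref{thm:spinal}, and then to supply the converse construction explicitly. Since multi-GGS groups are exactly the spinal groups with constant defining sequence $\omega_n = \omega$ and with rooted group equal to $A_m = \langle (0\,1\,\cdots\,m-1)\rangle$, I would first invoke Theorem~\ref{thm:spinal} to obtain, from a conjugating $f$ with $\widetilde G^f = G$, an integer $N$, an isomorphism $\iota\colon D \to \widetilde D$, and the level-wise data $\phi_n, \rho_n, \alpha_n$ satisfying $\omega_n = \phi_n^{\,X\setminus\{0\}} \circ \rho_n \circ \alpha_n \circ \widetilde\omega_n \circ \iota$. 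The key reduction is that, for multi-GGS groups, $\sigma^n R = A_m$ for every $n$, so that $\phi_n$ is an inner automorphism of $\St_{\Sym(X)}(0)$ fixing $A_m$ setwise, and $\rho_n$ is a tuple of inner automorphisms of $A_m$. Because $A_m$ is \emph{abelian}, all of its inner automorphisms are trivial: the tuple $\rho_n$ collapses to the identity, and $\phi_n$ reduces to conjugation by an element normalising $A_m$, i.e.\ an element of $\Norm_{\Sym(X)}(A_m)\cap\St_{\Sym(X)}(0)$.

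Next I would absorb the surviving $\phi_n$ into the factor-permuting automorphism. Conjugation of $A_m^{\,m-1}$ by a fixed element $c \in \Norm_{\Sym(X)}(A_m)\cap\St_{\Sym(X)}(0)$ applied diagonally, composed with the permutation of direct factors induced by $\alpha'\in\St_{\Sym(X)}(0)$, is itself an automorphism $\alpha$ of $A_m^{\,m-1}$ that permutes the direct factors by an element of $\Norm_{\Sym(X)}(A_m)\cap\St_{\Sym(X)}(0)$ — precisely the shape demanded in conclusion~(i). Since the constant sequence makes every $\omega_n$ equal to $\omega$ and every $\widetilde\omega_n$ equal to $\widetilde\omega$, the $N$ from Theorem~\ref{thm:spinal} becomes irrelevant: the single relation $\omega = \alpha\circ\widetilde\omega\circ\iota$ holds. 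This completes the forward direction.

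For the converse I would argue directly: given $\alpha$ and $\iota$ with $\omega = \alpha\circ\widetilde\omega\circ\iota$, I would construct the conjugating tree automorphism $f$ explicitly and level-homogeneously, using that $\alpha$ is built from a factor-permutation by some $\alpha'\in\St_{\Sym(X)}(0)$ together with the diagonal conjugation datum. Concretely, the factor-permuting part dictates how $f$ should relabel the maximal subtrees hanging off the distinguished ray, while the conjugation-by-$c$ part prescribes a recursively defined automorphism acting the same way on every level; one then checks by the standard wreath-recursion bookkeeping that $f$ conjugates the directed generators of $\widetilde G$ (reindexed through $\iota$) onto those of $G$, and conjugates $A_m$ to $A_m$ so that the rooted generator is preserved. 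Verifying $\widetilde G^f = G$ then reduces to matching generators, which follows from $\omega = \alpha\circ\widetilde\omega\circ\iota$ together with the constancy of the sequences.

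The main obstacle I expect is the converse construction rather than the forward reduction: one must produce a genuine automorphism $f$ of the whole tree $T$ from the purely level-one, combinatorial datum $(\alpha,\iota)$, and confirm that conjugation by $f$ simultaneously respects the recursive self-similar structure at every level — i.e.\ that the naive level-one relabelling extends consistently down the infinite tree without creating a mismatch in the wreath recursion. The fact that $A_m$ is abelian is what makes this tractable, since it removes the inner-automorphism ambiguity and forces the conjugating element on each level to lie in the controlled group $\Norm_{\Sym(X)}(A_m)\cap\St_{\Sym(X)}(0)$, so that a single recursively defined $f$ suffices. I would take care to confirm that the element of $\Norm_{\Sym(X)}(A_m)$ realising $\alpha'$ can indeed be chosen to fix $0$, which is exactly what keeps the distinguished ray invariant and makes $f$ a directed-structure-preserving automorphism.
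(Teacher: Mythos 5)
Your overall architecture matches the paper's: the sufficiency is proved exactly as in Lemma~\ref{lem:sufficient}, by conjugating with the constant-portrait automorphism $\kappa(\alpha')$, and your forward direction runs the same underlying machinery (Lemmas~\ref{lem:mes reducing}, \ref{lem:descent} and \ref{lem:dir}), only packaged through the statement of Theorem~\ref{thm:spinal} rather than through the separate Proposition~\ref{prop:mEGSness} that the paper proves for multi-EGS groups. The observations that $\rho_n$ dies because $A_m$ is abelian, that $\phi_n$ becomes conjugation by an element of $\Norm_{\Sym(X)}(A_m)\cap\St_{\Sym(X)}(0)$, and that the diagonal conjugation can be absorbed (the paper absorbs it into $\iota$ as a power map $\mu_k$, you absorb it into $\alpha$ --- both are fine since the directed groups are abelian of exponent $m$) are all correct.

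There is, however, a genuine gap in your forward direction. Theorem~\ref{thm:spinal} only guarantees that $\alpha_n$ permutes the direct factors by an element $\alpha'\in\St_{\Sym(X)}(0)$, whereas conclusion~(i) of Theorem~\ref{thm:mGGS} requires the factor-permuting element to lie in $\Norm_{\Sym(X)}(A_m)\cap\St_{\Sym(X)}(0)$. Precomposing with a diagonal conjugation does not change the induced permutation of the factors, so your composite $\alpha$ still permutes the factors by $\alpha'$, and you never show $\alpha'\in\Norm_{\Sym(X)}(A_m)$. This is a real strengthening: for $m\geq 4$ the group $\Norm_{\Sym(X)}(A_m)\cap\St_{\Sym(X)}(0)\cong(\Z/m\Z)^{\times}$ is a proper subgroup of $\St_{\Sym(X)}(0)\cong\Sym(X\setminus\{0\})$, and it is exactly this restriction that makes the ``only if'' direction match what the sufficiency construction can realise. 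To close the gap you must go back into the proof of Theorem~\ref{thm:spinal} (the statement alone does not suffice, since it has forgotten that $\alpha'=f|^{0^{n-1}}$) and argue, e.g., that $(\sigma^{n-1}\widetilde G)^{f|_{0^{n-1}}}=\sigma^{n-1}G$ forces the root labels to satisfy $A_m^{\,f|^{0^{n-1}}}=A_m$, or invoke an analogue of Lemma~\ref{lem:conjugating spinal elements}, which is how the paper secures the normalising property inside Proposition~\ref{prop:mEGSness}. Apart from this missing step, your plan for the converse --- the level-homogeneous, recursively defined $f$ --- is exactly the paper's $\kappa(\alpha')$, and the verification you outline is the right one.
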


It was asked by Bartholdi, Grigorchuk and \u{S}uni\'{k} \cite[Question 4]{BGS03} whether every finitely generated branch group is isomorphic to a spinal group. Using the above results, we give a negative answer to this question within the class of weakly branch groups. We refer the reader to \cref{Section}{sec:prelim} for the definitions of  weakly branch and branch groups.

\begin{theorem}\label{thm:non-spinal}
	There exists a finitely generated branch group $G \leq \Aut T_3$ such that $G$ is not isomorphic to any spinal group $S \leq \Aut T_3$. Also, if $G$ is isomorphic to a spinal group $S \leq \Aut \widetilde T$ on a different tree~$\widetilde T$, then $S$ is not weakly branch with respect to its embedding into~$\Aut \widetilde T$.
\end{theorem}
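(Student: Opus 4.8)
The plan is to produce a concrete multi-EGS group $G$ on $T_3$ whose directed generators descend along two different rays, and to show that this genuinely multi-ray feature cannot be reproduced by any spinal group. I would fix $X=\{0,1,2\}$ and take $G=\langle a,b,c\rangle\le\Aut T_3$ with rooted generator $a=(0\,1\,2)$ and two directed generators $b,c$ whose recursions descend along two distinct directions --- say $b$ continues along the ray $1^\infty$ and $c$ along the ray $2^\infty$ --- with nontrivial rooted labels in the remaining coordinates. The labels would be chosen so that $G$ is branch; this I would check by the standard criteria for multi-EGS groups on a prime tree (level-transitivity together with a finite-index rigid stabiliser $\rst_G(v)$), as recalled in \cref{Section}{sec:multi-spinal}. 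By construction $G$ is polyspinal but not spinal, since its directed part is not supported on a single constant path.

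The decisive reduction is from abstract isomorphism to conjugacy inside $\Aut T$. Suppose $G\cong S$ for a spinal group $S\le\Aut\widetilde T$ that is weakly branch with respect to its action. Because $G$ is branch and $S$ is weakly branch, the rigidity of branch groups (in the sense of Lavreniuk--Nekrashevych) forces any such isomorphism to be spatial: it is induced by an isomorphism of rooted trees $\tau:\widetilde T\to T_3$. In particular $\widetilde T\cong T_3$, and after transporting $S$ along $\tau$ we may assume $S\le\Aut T_3$ and $S^f=G$ for some $f\in\Aut T_3$. Thus, modulo rigidity, the problem reduces to whether a genuine multi-EGS group can be conjugate to a spinal group within $\Aut T_3$.

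To close this off I would invoke the conjugacy condition for polyspinal groups that generalises \cref{Theorem}{thm:spinal}, applicable since both $G$ and $S$ are polyspinal. Conjugacy forces a relation of the shape $\omega_n=\phi_n^{X\setminus\{0\}}\circ\rho_n\circ\alpha_n\circ\widetilde\omega_n\circ\iota$, in which the only mechanism for redistributing supports among the coordinates is the factor-permutation $\alpha_n$, realised by a single element $\alpha'\in\St_{\Sym(X)}(0)$. The point is that this distinguishes exactly one spine coordinate, namely $0$, while permuting the non-spine coordinates $X\setminus\{0\}$ among themselves: for $S$ every directed section descends along one ray, whereas the directed sections of $G$ descend along two independent rays. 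Turning this into the invariant ``number of root directions along which the group has a nontrivial directed section'' --- which equals $1$ for spinal groups and $2$ for our $G$, and is preserved by every transformation allowed in the relation above --- yields the contradiction. Hence $G$ is not conjugate to $S$, and by the rigidity reduction $G$ is isomorphic to no weakly branch spinal group. Since every spinal group on the prime-valency tree $T_3$ is weakly branch, the first assertion follows immediately; and for a genuinely different tree $\widetilde T\not\cong T_3$ the rigidity step can only be consistent if $S$ fails to be weakly branch, which is the second assertion.

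The main obstacle is the rigidity reduction itself: once conjugacy is available the remainder is combinatorial bookkeeping, so the real work is guaranteeing that an abstract isomorphism from $G$ to a weakly branch spinal group must be induced by a tree isomorphism. I would need to verify that our explicit $G$ satisfies the hypotheses of the rigidity theorem --- in particular that it is branch rather than merely weakly branch --- and, for the first assertion, that a spinal group on $T_3$ abstractly isomorphic to a branch group is itself weakly branch in its action, so that the rigidity machinery applies. A secondary difficulty is selecting the labels of $b$ and $c$ so that $G$ is provably branch while keeping the two descent directions genuinely independent, ensuring that the counting invariant separates $G$ from every spinal group; this is precisely where the conjugacy analysis underlying \cref{Theorem}{thm:spinal} does the decisive work.
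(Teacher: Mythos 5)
Your outline matches the paper's in broad strokes (a two-ray multi-EGS group, a rigidity reduction from isomorphism to conjugacy, then a conjugacy obstruction), but the two load-bearing steps both have genuine gaps. First, the rigidity reduction for the main assertion. You invoke Lavreniuk--Nekrashevych rigidity, which requires \emph{both} actions to be (weakly) branch, and you then assert that ``every spinal group on the prime-valency tree $T_3$ is weakly branch.'' That claim is not established anywhere --- it is essentially the open question the paper itself raises after the proof (``Is every involution-free spinal group weakly branch with respect to its natural embedding?'') --- so your reduction to conjugacy does not go through as written. The paper avoids this entirely: it takes $G$ to be Pervova's extended Gupta--Sidki group, records that it is a just infinite \emph{torsion} branch group, and then applies Grigorchuk--Wilson's uniqueness-of-actions result (Corollary 1(a) of \cite{GrigorchukWilson}, together with the argument in \cite{KT}), which forces \emph{any} subgroup of $\Aut T_3$ isomorphic to $G$ to be conjugate to $G$, with no weak-branchness hypothesis on the spinal group $S$. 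You would also need just-infiniteness of your $G$ (which you never address) both for that machinery and for the second assertion, where ``weakly branch $+$ just infinite $\Rightarrow$ branch'' is what lets one apply \cite[Theorem~2]{GrigorchukWilson} to conclude $\widetilde T \cong T_3$.

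Second, the conjugacy obstruction itself is not carried out. You appeal to ``the conjugacy condition for polyspinal groups that generalises Theorem~\ref{thm:spinal},'' but no such polyspinal generalisation is proved in the paper (Theorem~\ref{thm:spinal} compares two \emph{spinal} groups), and your proposed invariant --- the number of root directions supporting a nontrivial directed section --- is left undefined in a conjugation-invariant way. It can probably be made to work (count cofinality classes of rays along which the group contains a directed element, using Lemma~\ref{lem:mes reducing} to show a spinal group has only one such class), but that is precisely the nontrivial work, and you defer it. The paper's actual contradiction is different and more elementary: using Lemmas~\ref{lem:descent} and~\ref{lem:mes reducing} it shows that the directed part $D$ of a putative conjugate spinal group must land, after taking enough sections along $\overline 0$, inside a conjugate of $\langle c\rangle$ alone (the candidates coming from $\langle b\rangle$ are killed by a section computation), so $D\cong C_3$ and the spinal group would be $2$-generated --- contradicting $G/\St_G(1)'\cong C_3\wr C_3^{\,2}$, which forces $G$ to need three generators. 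As it stands, your proposal identifies the right strategy but leaves both decisive steps unproved, and the first one rests on a claim that is not known to be true.
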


\medskip

\noindent \textit{Organisation.}  \cref{Section}{sec:prelim} consists of background material on groups acting on the $m$-adic tree and the definitions of weakly branch, branch, spinal, and polyspinal groups. In \cref{Section}{sec:conditions} we prove \cref{Theorems}{thm:spinal} and \ref{thm:mGGS}, and in \cref{Section}{sec:non-spinal} we prove \cref{Theorem}{thm:non-spinal}.

\subsection*{Acknowledgements} The first author acknowledges support by the Deutsche Forschungsgemeinschaft (German Research Foundation), grant 380258175. The second author acknowledges  support from EPSRC, grant EP/T005068/1, and from the Heinrich-Heine-Universit\"at (HHU) Forscher Alumni Programm.  She thanks 
 HHU for its
 hospitality.
Both authors thank B.~Steinberg for pointing out the family of multispinal groups.

\section{Preliminaries}\label{sec:prelim}
  For two integers $l, u \in \Z$, we denote by $[l, u]$ and $[l,u)$ the set
  of integers within the respective intervals.

  Let $m\in\N_{\ge 2}$ and let $T = T_m$ be the $m$-adic  tree,
  that is, a rooted tree where all vertices have $m$~children. Using the
  alphabet $X = [0, m)$, we identify $T$ with the
  Cayley graph of the free monoid~$X^*$ with respect to $X$, by identifing the root of the tree with the empty word. Thereby we establish
  a natural length function on~$T$. We will routinely refer to vertices of the tree as words, using the said identification.
  The words $u$ of length $\lvert u \rvert = n$,
(i.e. vertices of distance $n$ from the root)
  are called the $n$\textsuperscript{th} level vertices and 
  constitute the \textit{$n$\textsuperscript{th} layer} of the tree.

  By $T_u$ we denote the full rooted subtree of $T$ that has its root at
  a vertex~$u$ and includes all vertices having~$u$ as a prefix.
  For any two vertices $u$ and $v$, the map induced by replacing the
  prefix $u$ by $v$, yields an isomorphism between the
  subtrees $T_u$ and~$T_v$.  

  Every $f\in \Aut T$ fixes the root, and the orbits of
  $\Aut T$ on the vertices of the tree~$T$ are precisely its
  layers.
  For $u \in X^*$ and
  $x \in X$ we have $f(u x) = f(u) x'$, for $x' \in X$ 
  uniquely determined by~$u$ and~$f$. This induces a permutation
  $f|^{u}$ of~$X$ which satisfies
  \[
  f(u x) = f(u) f|^{u}(x).
  \]
 The permutation~$f|^{u} \in \Sym(X)$ is called the \textit{label} of~$f$ at~$u$, and the collection of all labels of~$f$ constitutes the \emph{portrait} of~$f$. There is a one-to-one correspondence between automorphisms of~$T$ and portraits.
We say that an automorphism $f\in \text{Aut }T$  has \emph{constant portrait} induced by a permutation~$\sigma$ of~$X$ if  all labels of~$f$ equal~$\sigma$; this automorphism is denoted by~$\kappa(\sigma)$.
 
 The automorphism $f$ is \textit{rooted} if $f|^\omega = 1$ for
  $\omega$ unequal to the root. Rooted automorphisms can be thought of as both elements of $\Aut T$ and $\Sym(X)$.
  
  Let $x \in X$ be a letter. We write $\overline{x}$ for the infinite simple rooted path $(x^n)_{n \in \N_0}$.
  The automorphism $f$ is \textit{directed}, with directed
  path~$\overline{x}$ for some $x \in X$, if the support
  $\{ \omega \mid f|^\omega \ne 1 \}$ of its labelling is infinite and
  marks only vertices at distance~$1$ from the set of
  vertices corresponding to the path~$\overline{x}$.
    
    More generally, for $f$ an automorphism of~$T$, since the layers are invariant under~$f$, for $u,v\in X^*$, the equation
    \[
    f(uv)=f(u)f|_u(v)
    \]
    defines a unique automorphism $f|_u$ of $T$ called the \emph{section of $f$ at $u$}. This automorphism can be viewed as the automorphism of $T$ induced by $f$ upon identifying the rooted subtrees of $T$ at the vertices $u$ and $f(u)$ with the tree $T$. For $G$ a subgroup of $\Aut T$, we will denote the set of all sections of group elements at $u$ by $G|_u$.
    
The action of $\Aut T$ on~$T$ will be on the left. We observe that, for any $u,v\in X^*$ and automorphisms $f,g\in\Aut T$, we have 
    \begin{align*}
       ( f|_u)|_v&= f|_{uv},\\
       (fg)|_u &=f|_{g(u)} g|_{u},\\
       f^{-1}|_u &= (f|_{f^{-1}(u)})^{-1}.
    \end{align*}
    The corresponding equations also hold for the labels $f|^u$.

\subsection{Subgroups of $\Aut T$}
Let $G$ be a subgroup of $\Aut T$ acting \textit{spherically
  transitively}, that is, transitively on every layer of $T$. The
\textit{vertex stabiliser} $ \mathrm{st}_G(u)$ is the subgroup
consisting of elements in~$G$ that fix the vertex~$u$.  For
$n \in \mathbb{N}$, the \textit{$n$\textsuperscript{th} level stabiliser}
  $\mathrm{St}_G(n)= \bigcap_{\lvert u\rvert =n}
  \mathrm{st}_G(u)$
is the subgroup consisting of automorphisms that fix all vertices at
level~$n$.  

We also write $\mathrm{st}_G(u)|_u$ for the restriction  of the vertex stabiliser
$\mathrm{st}_G(u)$ to the subtree $T_u$ rooted at a
vertex~$u$. Since $G$ acts spherically transitively, the vertex
stabilisers at every level are conjugate under~$G$.  The group $G$ is \textit{fractal} if $\mathrm{st}_G(u)|_u$ coincides with $G|_u$ for all vertices~$u$.

Recall the cyclic subgroup $A_m$ of $\Sym(X)$ generated by $(0 \; 1\; \cdots\; m-1)$. We denote by $\Gamma$ the subgroup of all automorphisms, whose labels are all contained in~$A_m$. In other words, the group $\Gamma$ is the inverse limit  of $n$-fold iterated wreath products of~$A_m$: 
		$$
		\Gamma= \varprojlim_{n\in\mathbb{N}} A_{m} \wr \overset{n}\cdots \wr A_{m}.
$$

A group $G \leq \Aut T$ is called \emph{reducing with respect to $(\mathcal N_{n})_{n \in
\N}$}, if there is a sequence of finite sets $\mathcal N_n \subset \Aut T$ such that for all $g \in G$ there is a positive integer $N$ such that $g|_u \in \mathcal N_m$ for all $u \in X^m$ with $m \geq N$. The sequence $(\mathcal N_{n})_{n \in \N}$ is called the \emph{nuclear sequence of $G$}. If there is some $k \in \N$ such that the sequence $(\mathcal N_{n})_{n \geq k}$ is constant, then $G$ is called \emph{contracting} and the set $\mathcal N = \mathcal N_{k}$ is called the \emph{nucleus} of~$G$.

Let $u \in T$ be a vertex. The \emph{rigid vertex stabiliser of $u$} is the subgroup $\text{rst}(u) \leq \Aut T$ consisting of a automorphisms whose portrait is trivial outside of~$T_u$, and the \emph{$n$\textsuperscript{th} rigid level stabiliser $\Rst(n)$} is the product of all rigid vertex stabilisers of vertices at the $n$\textsuperscript{th} level. A group $G \leq \Aut T$ is called \emph{weakly branch} if it acts spherically transitive and $G \cap \Rst(n)$ is non-trivial for all $n \in \N$. It is called \emph{branch} if it is weakly branch and $G \cap \Rst(n)$ is of finite index in $G$ for all $n \in \N$.

\subsection{Polyspinal groups}\label{sec:multi-spinal}

Spinal groups were first introduced by Bartholdi and \u{S}uni\'{k} in~\cite{BS01} as a common generalisation of the Grigorchuk group and the class of GGS-groups. A more general
definition was formulated by Bartholdi, Grigorchuk and \u{S}uni\'{k} in~\cite{BGS03}.
Below we define a natural generalisation of spinal groups acting on the $m$-adic tree~$T$.

Let $(a_n)_{n \in \N}$ be any sequence. We denote the shift operator $(a_n)_{n \in \N} \mapsto (a_{n+1})_{n \in \N}$ by $\sigma$.

A directed automorphism $f$ is described by a sequence of $(m-1)$-tuples of
permutations of $X$ and a path $\overline{x}$. More generally, given a group $D$, a directed path $\overline{x}$ and a sequence 
$\omega = (\omega_n)_{n \in \N}$ of homomorphisms $\omega_n: D \to \Sym(X)^{X\setminus\{x\}}$, we (recursively) define a tree automorphism for every $d \in D$ by
\[
	d_{\omega, x}|_{y} = \begin{cases}
		d_{\sigma \omega, x} &\text{ if }{y = x},\\
		\pi_{y}\omega_1(d) &\text{ otherwise.}
	\end{cases}
\]
Here $\pi_{y}$ denotes the projection to the ${y}$\textsuperscript{th} component, for $y\in X$.
Write $D_{\omega, x}=\{d_{\omega,x}\mid d\in D\}$ for the set of all such automorphisms. Since $d_{\omega, x}$ fixes
$\overline{x}$, we have $d'_{\omega, x}d''_{\omega, x} = (d'd'')_{\omega, x}$ for all
$d', d'' \in D$, and hence a homomorphism $D \to D_{\omega, x}$ with kernel
\[
	\bigcap_{n \in \N} \ker(\omega_n).
\]
All sections $D_{\omega, x}|_{x^k}$ are isomorphic to $D$ if and only if
$\bigcap_{n \geq k} \ker(\omega_n) = 1$ for all $k \in \N$. In this case we call $D_{\omega, x}$ a \emph{directed group defined by $\overline x$ and $\omega$} and drop the indices, identifying it with $D$.

Let $D$ be a directed group. For every $n \in \N$ we define the $n$\textsuperscript{th} rooted companion group
\[
	\sigma^n R(D) = \langle d|^v \mid |v| = n \rangle = \langle d|^{x^{n-1}y} \mid y \in X \rangle.
\]
To shorten the notation, we write
\begin{align*}
	\sigma^n d &= d|_{x^n}& \text{ for } d \in D, \,n \in \N,\\
	\sigma^n D &= D_{\sigma^n \omega, x}& \text{ for } n \in \N.
\end{align*}

\begin{definition}
	Let $R$ be a group of rooted automorphisms acting transitively on~$X$, and for some $r \in [1, m]$, let $x^{(0)}, \dots, x^{(r-1)}$ be $r$ distinct elements in $X$. Let $D^{(0)}, \dots, D^{(r-1)}$ be directed groups defined by the constant paths given by ${x^{(0)}}, \dots, {x^{(r-1)}}$ and $\omega^{(0)}, \dots, \omega^{(r-1)}$ respectively, where the latter are sequences of homomorphisms $\omega^{(i)}_n: D^{(i)} \to \Sym(X)^{X\setminus\{x^{(i)}\}}$ such that
	\begin{enumerate}
		\item the groups $\sigma^n R(D^{(i)})$ for $i \in [0,r)$ act transitively on $X$ for all $n \in \N$, and
		\item for all $i \in [0,r)$ and $k \in \N$
		$$\bigcap_{n \geq k} \ker(\omega^{(i)}_n) = 1.$$
	\end{enumerate}
	Then
	\[
		G = \langle R, D^{(i)} \mid i \in [0,r) \rangle
	\]
	is called the \textit{polyspinal} group with data $R$, $\omega^{(0)}, \dots, \omega^{(r-1)}$, and $x^{(0)}, \dots, x^{(r-1)}$. If $r = 1$ we drop the superscript $(0)$, and call $G$ the \textit{spinal group with data $R, \omega$}.
\end{definition}

\begin{remark}\label{rem:paths}
	The choice of the path $\overline{x}$ does not matter in case of spinal groups, which is why it is omitted from the defining data. More generally, one easily defines \emph{directed elements along~$\ell$}, an arbitrary infinite simple rooted path in~$T$, and, with this in mind, one can more generally define a spinal group to be equipped with an arbitrary directed path. However, by conjugating by an appropriate element $f \in \Aut T$, we can always assume that $\ell=\overline 0$.
	The same cannot be said about polyspinal groups with more than one arbitrary, non-constant, directed path; compare~\cite[Lemma~2.3]{FAGT}. We will not consider this more general case here.
\end{remark}
	
	For any $n \in \N$, the $n$\textsuperscript{th} shifted companion of $G$, 
	\[
		\sigma^n G = \langle \sigma^n R, \sigma^n D^{(i)} \mid i \in [0,r) \rangle,
	\]
	where $\sigma^n R := \langle \sigma^n R(D^{(i)}) \mid i \in [0,r) \rangle$, is again a polyspinal group.


\begin{definition}
	We record some previously studied special cases:
	\begin{itemize}
		\item If $\sigma^n R$ is equal to the group {$A_m = \langle (0 \; 1 \;\cdots\; m-1) \rangle$} for all~$n$, all $D^{(i)}$ are necessarily direct products of cyclic groups of order~$m$. In this case, we drop the rooted group from the defining data. Assume further that the sequences $\omega^{(i)}$ are all constant. In this case, one calls $G$ a \emph{multi-EGS group}; compare~\cite{KT}.  Clearly, all multi-EGS groups are subgroups of~$\Gamma$. 
		\item A group that is both spinal and a multi-EGS group is called a \emph{multi-GGS group}; compare~\cite{AKT}.
		\item A multi-GGS group such that the unique non-trivial $D^{(0)}$ is cyclic is called a \emph{GGS-group}.
	\end{itemize}
\end{definition}

\begin{lemma}\label{lem:mes reducing}
	Let $G$ be a polyspinal group with defining data $R$, $\omega^{(0)}, \dots, \omega^{(r-1)}$, and $x^{(0)}, \dots, x^{(r-1)}$. Then $G$ is reducing with respect to
	\[
		\left( {\sigma^nR}\cup\bigcup_{i = 0}^{r-1} \sigma^n D^{(i)}  \right)_{n \in \N}.
	\]
\end{lemma}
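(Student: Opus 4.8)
The plan is to fix $g\in G$, expand each deep section of $g$ as a product of sections of generators, and show that for $u$ deep enough all but one harmless cluster of these factors combine inside a single $\sigma^m D^{(i)}$ or $\sigma^m R$. Write $g=s_1\cdots s_\ell$ as a word of (minimal) length $\ell=\ell(g)$ in the generating set $R\cup\bigcup_i D^{(i)}$ and put $N=2\ell+2$. For $u\in X^m$ with $m\ge N$, the section rule $(fh)|_u=f|_{h(u)}\,h|_u$ gives
\[
 g|_u=\prod_{j=1}^{\ell}s_j|_{v_j},\qquad v_j=(s_{j+1}\cdots s_\ell)(u),
\]
a product of level-$m$ sections of single generators. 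First I would record these: a rooted generator has trivial section at every vertex of level $\ge 1$, while a directed generator $d\in D^{(i)}$ satisfies $d|_{(x^{(i)})^m}=\sigma^m d\in\sigma^m D^{(i)}$ and $d|_{(x^{(i)})^{m-1}y}=\pi_y\omega^{(i)}_m(d)\in\sigma^m R$ for $y\ne x^{(i)}$, and has trivial section at all remaining level-$m$ vertices. In particular each factor $s_j|_{v_j}$ already lies in $\mathcal N_m$, and $s_j|_{v_j}\ne 1$ forces $s_j$ to be directed, say $s_j\in D^{(i_j)}$, with $v_j$ having prefix $(x^{(i_j)})^{m-1}$.

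The difficulty is that $g|_u$ is an a priori uncontrolled product of such factors, and $\mathcal N_m$ is not closed under multiplication (a product of a directed and a rooted element is neither). The geometric input that controls this is that \emph{each generator changes at most one coordinate of a vertex}: a rooted generator only relabels the first letter, while a directed generator along $\overline{x^{(i'')}}$ acts non-trivially only at the one coordinate following the point where a vertex first leaves $\overline{x^{(i'')}}$. Hence any two of $v_0,\dots,v_\ell$ differ in at most $\ell$ coordinates. Since a non-trivial factor forces $v_j$ to agree with $(x^{(i_j)})^{m-1}$ in its first $m-1$ coordinates and $m-1>\ell$, all non-trivial factors share one and the same path index $i$ and occur at vertices lying among the $m$ children of $c=(x^{(i)})^{m-1}$.

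The main step, and the real obstacle, is to upgrade this to: all non-trivial factors occur at the \emph{same} child $cz_0$ of $c$. Two such children differ only in the last coordinate, so I must show that the last coordinate cannot change between two visits of the trajectory $v_\ell,\dots,v_0$ to the cluster $\{cz:z\in X\}$. Changing coordinate $m$ requires a directed $D^{(i)}$-generator applied at a vertex whose first divergence from $\overline{x^{(i)}}$ sits at coordinate $m-1$; crucially, this forces coordinate $m-1$ to have moved off $x^{(i)}$, which by the same mechanism forces coordinate $m-2$ to move, and so on. I would make this precise as a descending cascade: using the Hamming bound to pin each relevant directed generator to the index $i$ whenever its coordinate exceeds $\ell+2$, one sees that changing coordinate $m$ between two cluster visits would require each of the coordinates $m,m-1,\dots,\ell+2$ to be touched, i.e.\ at least $m-\ell-1$ distinct coordinates to be modified. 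As the whole word offers only $\ell$ modifications, this is impossible once $m\ge 2\ell+2$, which is exactly where the choice of $N$ enters. Verifying that each downward step is genuinely forced, and that the Hamming estimate keeps the generators aligned with the path, is the only delicate part of the argument.

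Granting the cascade bound, all non-trivial factors sit at a single vertex $cz_0$. If $z_0=x^{(i)}$ they are all of the form $\sigma^m s_j\in\sigma^m D^{(i)}$, so their product lies in the group $\sigma^m D^{(i)}$; otherwise they are all rooted sections in $\sigma^m R$, so their product lies in $\sigma^m R$; and if there are none then $g|_u=1$. In every case $g|_u\in\mathcal N_m$ for all $u\in X^m$ with $m\ge N$, which is the required reducing property; the remaining steps beyond the cascade are the routine section computations recorded in the first paragraph.
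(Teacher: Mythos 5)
Your argument is correct, but it takes a genuinely different route from the paper's. The paper writes $g$ as a product of syllables $d_j^{\,r_j}$ (directed generators conjugated by rooted ones) and proves the purely local statement that the syllable length of $g|_{xy}$ is strictly smaller than that of $g$ whenever the latter exceeds $1$, for every word $xy$ of length two; iterating this two-level contraction yields the reducing property with no need to track where in the tree the surviving syllable sits. You instead fix one deep vertex $u$ and follow its trajectory under the suffixes of the generator word, using the observation that every generator alters at most one coordinate to confine all non-trivial section factors first to a single spine index $i$, then to the children of $(x^{(i)})^{m-1}$, and finally --- via the descending cascade --- to a single child, so that the whole product stays inside one of the groups $\sigma^m D^{(i)}$ or $\sigma^m R$. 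Your route makes the threshold $N=2\ell+2$ explicit and isolates exactly why directed and rooted sections cannot mix; its cost is the cascade, which you rightly flag as the delicate step but which does go through as you sketch it: a change of coordinate $c\geq \ell+3$ is pinned to a $D^{(i)}$-generator by the Hamming bound and forces a strictly later change of coordinate $c-1$, so at least $m-\ell-1$ distinct generator applications would be consumed against the $\ell$ available. Two cosmetic remarks: you only need that either every non-trivial factor sits at the child $(x^{(i)})^{m-1}x^{(i)}$ or none does, since $\sigma^m R$ and $\sigma^m D^{(i)}$ are each groups and so closed under the relevant products; and the boundary case $m=2\ell+2$ versus $m=2\ell+3$ is an off-by-one that can be absorbed by enlarging $N$ without affecting the argument.
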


\begin{proof}
	Every element $g \in G$ can be represented by a word of the form
	\(
		\big(\prod_{j = 0}^{l-1} d_j^{\,r_j}\big) r_{l}
	\)
	for some $l\in\N$, with $r_j \in R$ and $d_j \in D^{(i_j)}$ with $i_j \in [0,r)$, for $j \in [0,l]$. We further assume that $i_j\ne i_{j+1}$ if $r_j=r_{j+1}$, for some $j\in[0, l)$. An element of the form $d_j^{\,r_j}$ is called a \emph{syllable}, and consequently $l = \operatorname{syl}(g)$ is called the \emph{syllable length} of $g$. It is enough to prove $\operatorname{syl}(g|_{xy}) < \operatorname{syl}(g)$ for all $g \in G$ with $\operatorname{syl}(g) > 1$ and $xy \in X^2$.
	
	Let $d_j^{\,r_j}d_{j+1}^{\,r_{j+1}}$ be two neighbouring syllables. Then
	\[
		(d_j^{\,r_j}d_{j+1}^{\,r_{j+1}})|_x = d_j|_{x^{r_j}}d_{j+1}|_{x^{r_{j+1}}}
	\]
	has syllable length one or zero if $i_j^{\,r_j^{-1}} \neq i_{j+1}^{\,r_{j+1}^{-1}}$. Otherwise, for $x=i_j^{\,r_j^{-1}} = i_{j+1}^{\,r_{j+1}^{-1}}$ it is $(d_j^{\,r_j}d_{j+1}^{\,r_{j+1}})|_x = d_jd_{j+1}$. If $i_j = i_{j+1}$, this is again of syllable length $1$. Hence we may assume $i_j \neq i_{j+1}$. But then
	\[
		(d_j^{\,r_j}d_{j+1}^{\,r_{j+1}})|_{xy} = (d_jd_{j+1})|_y = d_j|_y d_{j+1}|_y
	\]
	has syllable length at most $1$.
	
	We have proven that upon taking sections at vertices of level $2$ at most every second syllable may contribute a syllable to the section. Hence $\operatorname{syl}(g|_{xy}) < \operatorname{syl}(g)$ if $\operatorname{syl}(g) > 1$.
\end{proof}

\begin{lemma}
	Polyspinal groups are fractal.
\end{lemma}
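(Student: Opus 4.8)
The plan is to prove the stronger, self-reproducing statement that $\st_G(u)|_u = G|_u$ for \emph{every} vertex $u$, by induction on $|u|$, using at each step the observation recorded just before the lemma that the shifted companion $\sigma G$ is again polyspinal. Since $\st_G(u)\le G$ gives $\st_G(u)|_u\subseteq G|_u$ for free, the entire content is the reverse inclusion: every section $g|_u$ must already be realised by an element of $G$ that fixes $u$.

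First I would settle the level-one case $\st_G(x)|_x=G|_x=\sigma G$ for each letter $x\in X$, since this is where the defining data enters. Two preliminary remarks do the bookkeeping. On one hand, each directed generator $d\in D^{(i)}$ lies in $\St_{\Aut T}(1)$ (its label at the root is trivial, since the root lies on the directed path while the support sits at distance one from it), so $d$ fixes every letter, and its level-one sections are $d|_{x^{(i)}}=\sigma d\in\sigma D^{(i)}$ and $d|_y=\pi_y\omega^{(i)}_1(d)\in\sigma R$ for $y\ne x^{(i)}$; together with the trivial sections of rooted generators, and tracking the image of $x$ through a word in the generators and their inverses, these show $G|_x\subseteq\sigma G$. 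On the other hand, $\st_G(x)\to\Aut T_x$, $h\mapsto h|_x$, is a homomorphism, so $\st_G(x)|_x$ is a subgroup and it suffices to hit a generating set of $\sigma G$. Here the transitivity of $R$ is used: for a target letter $y$ pick $r\in R$ with $r(x)=y$; then the conjugate $d^{r}$ fixes $x$ (because $d$ fixes $y$) and, $r$ being rooted with trivial sections below the root, satisfies $(d^{r})|_x=d|_y$. Taking $y=x^{(i)}$ realises every generator $\sigma d$ of $\sigma D^{(i)}$, and taking $y\ne x^{(i)}$ realises every generator $\pi_y\omega^{(i)}_1(d)$ of $\sigma R=\langle\sigma R(D^{(i)})\rangle$, whence $\sigma G\subseteq\st_G(x)|_x$. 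The three inclusions $\sigma G\subseteq\st_G(x)|_x\subseteq G|_x\subseteq\sigma G$ then collapse to equalities.

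With the base case in hand I would run the induction. Writing $u=xv$ and using $G|_{xv}=(G|_x)|_v=(\sigma G)|_v$ together with the fact that $\sigma G$ is polyspinal, the inductive hypothesis applied to $\sigma G$ gives, for any $g\in G|_u$, an element $h_2\in\st_{\sigma G}(v)$ with $h_2|_v=g$; lifting $h_2$ through the surjection $\st_G(x)\twoheadrightarrow\st_G(x)|_x=\sigma G$ produces $h_1\in\st_G(x)$ with $h_1|_x=h_2$. Then $h_1$ fixes $x$ and $h_1|_x=h_2$ fixes $v$, so $h_1\in\st_G(u)$ and $h_1|_u=(h_1|_x)|_v=g$, completing the step. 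Note that condition~(i) of the definition is precisely what guarantees that every $\sigma^n G$ is polyspinal with transitive rooted companion, so the induction never leaves the class.

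The step I expect to be the main obstacle is the base case, specifically producing the rooted part $\sigma R$ inside $\st_G(x)|_x$: one must observe that the generators of $\sigma R$ are exactly the off-path level-one sections $d|_y=\pi_y\omega^{(i)}_1(d)$ of the directed generators, and that conjugating by a rooted permutation relocates the realising vertex without disturbing the section. Everything else — the homomorphism property of $h\mapsto h|_x$, the inclusion $G|_x\subseteq\sigma G$, and the lifting in the inductive step — is routine once the directed generators are placed in $\St_{\Aut T}(1)$.
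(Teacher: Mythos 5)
Your proof is correct and follows essentially the same route as the paper's (much terser) argument: reduce to the level-one case using that $\sigma G$ is again polyspinal, then realise the generators of $\sigma G$ inside $\st_G(x)|_x$ by conjugating directed generators by rooted elements, which is exactly what the paper means by ``considering $\langle D^{(0)},\dots,D^{(r-1)}\rangle^G|_u$''. Your write-up just makes the induction and the conjugation bookkeeping explicit.
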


\begin{proof}
    Since $\sigma G=G|_u$, where $u$ is any first-level vertex, is again a polyspinal group, it suffices to show that $\mathrm{st}_G(u)|_u$ equals $\sigma G$. The result follows from the definition of $\sigma G$ and upon considering $\langle D^{(0)}, \dots, D^{(r-1)}\rangle^G|_u$. 
\end{proof}

\section{Conditions to be conjugate}\label{sec:conditions}

\subsection{Necessary conditions for spinal groups to be conjugate}

Here, let $G$, respectively $\widetilde G$, denote polyspinal groups with defining data $R$, $\omega^{(0)}, \dots, \omega^{(r-1)}$,  $x^{(0)}, \dots, x^{(r-1)}$, respectively $\widetilde R$, $\widetilde \omega^{(0)}, \dots, \widetilde \omega^{(\widetilde r-1)}$, $\widetilde x^{(0)}, \dots, \widetilde x^{(\widetilde r-1)}$. To be consistent, we write $\sigma^n \widetilde R$ for the rooted generators of
$\sigma^n \widetilde G$.

\begin{lemma}\label{lem:cosets}
	Let $G$ and $\widetilde G$ be polyspinal groups that are conjugate via $f \in \Aut T$; that is, $G^f=\widetilde{G}$. Then $f|_u \equiv f|_v \pmod {\sigma^n G}$ for all $n \in \N$ and $u, v \in X^n$. In particular, $f|^u \equiv f|^v \pmod {\sigma^n R}$.
\end{lemma}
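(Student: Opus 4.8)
The plan is to track how the sections of $f$ interact with the conjugation $G^f=\widetilde G$ by repeatedly applying the three identities for sections of products and inverses recorded in Section~\ref{sec:prelim}. Writing $\widetilde G = f^{-1}Gf$, fix a level $n$ and, for $g\in G$, set $\tilde g = f^{-1}gf\in\widetilde G$. Expanding $\tilde g|_w$ for $w\in X^n$ via $(AB)|_w = A|_{B(w)}B|_w$ applied twice, together with $f^{-1}|_z = (f|_{f^{-1}(z)})^{-1}$ and the observation $f^{-1}(g(f(w))) = \tilde g(w)$, yields the master identity
\[
	\tilde g|_w = (f|_{\tilde g(w)})^{-1}\,g|_{f(w)}\,f|_w,
\]
or equivalently $f|_{\tilde g(w)} = g|_{f(w)}\,f|_w\,(\tilde g|_w)^{-1}$. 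This already isolates the point: the section of $f$ at a vertex and at its image under an element of $\widetilde G$ differ only by the two explicit factors $g|_{f(w)}$ and $\tilde g|_w$.

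The key is that both factors are governed by shifted companion groups. Since polyspinal groups are fractal, iterating the relation $\sigma G = G|_u$ from the fractal lemma gives $G|_u = \sigma^n G$ and $\widetilde G|_u = \sigma^n\widetilde G$ for every $u\in X^n$; in particular $g|_{f(w)}\in\sigma^n G$ and $\tilde g|_w\in\sigma^n\widetilde G$. To turn membership in $\sigma^n\widetilde G$ into membership in $\sigma^n G$, I would first establish the conjugacy relation $\sigma^n\widetilde G = (f|_w)^{-1}\,\sigma^n G\,f|_w$. This follows by restricting the master identity to $\tilde g\in\st_{\widetilde G}(w)$ (so that $\tilde g(w)=w$ and $g\in\st_G(f(w))$) and taking sections: fractality identifies $\st_{\widetilde G}(w)|_w$ with $\sigma^n\widetilde G$ and $\st_G(f(w))|_{f(w)}$ with $\sigma^n G$, giving the relation. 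Substituting back,
\[
	f|_{\tilde g(w)}(f|_w)^{-1} = g|_{f(w)}\cdot f|_w\,(\tilde g|_w)^{-1}(f|_w)^{-1},
\]
where the bracketed part is the $f|_w$-conjugate of an element of $\sigma^n\widetilde G$; since $\sigma^n\widetilde G = (f|_w)^{-1}\sigma^n G\,f|_w$, this conjugate lies in $\sigma^n G$, and the whole right-hand side is a product of two elements of $\sigma^n G$. Hence $f|_{\tilde g(w)}\equiv f|_w\pmod{\sigma^n G}$.

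Finally, to reach arbitrary $u,v\in X^n$ I would invoke spherical transitivity of $\widetilde G$ (which follows from the transitivity of $R$ and of the groups $\sigma^n R(D^{(i)})$ in the definition, together with fractality): choose $\tilde g\in\widetilde G$ with $\tilde g(u)=v$ and apply the previous step with $w=u$ to obtain $f|_u\equiv f|_v\pmod{\sigma^n G}$. The ``in particular'' assertion then follows by applying the root-label map $\rho\colon\Aut T\to\Sym(X)$, $h\mapsto h|^{\varepsilon}$, where $\varepsilon$ is the root: it is a homomorphism because every tree automorphism fixes the root, it sends $f|_u$ to $f|^u$, and it maps $\sigma^n G=\langle\sigma^n R,\sigma^n D^{(i)}\rangle$ onto $\sigma^n R$, since directed automorphisms fix the path $\overline x$ and so have trivial root label while the rooted generators already lie in $\sigma^n R$.

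I expect the main obstacle to be the bookkeeping surrounding the conjugacy relation $\sigma^n\widetilde G=(f|_w)^{-1}\sigma^n G\,f|_w$ and, relatedly, keeping straight which companion group each stray factor belongs to: getting the sidedness of the conjugation correct is exactly what forces the residual term into $\sigma^n G$ rather than merely into $\sigma^n\widetilde G$.
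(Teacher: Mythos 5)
Your argument is correct and is essentially the paper's proof: both expand the section identity for $h^f=\tilde g$ at a level-$n$ vertex, use spherical transitivity of $\widetilde G$ to move $u$ to $v$, and use fractality to place the leftover factors in the shifted companion groups. The only difference is cosmetic: the paper uses fractality up front to replace $g'$ by $g=g'g''$ with $g|_u=1$, which kills the residual term $\tilde g|_w$ and makes your auxiliary relation $\sigma^n\widetilde G=(f|_w)^{-1}\,\sigma^n G\,f|_w$ unnecessary, whereas you carry that term along and conjugate it back into $\sigma^n G$.
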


\begin{proof}
	Let $n \in \N$ and $u, v \in X^n$. Since $\widetilde G$ acts spherically transitively, there is an element $g' \in \widetilde G$ such that $u^{g'} = v$. Now $g'|_u \in \sigma^n \widetilde G$, and since $\mathrm{st}_{\widetilde G}(u)|_u = \sigma^n \widetilde G$ there is an element $g'' \in \mathrm{st}_{\widetilde G}(u)$ such that $g''|_u = (g'|_u)^{-1}$. Thus $g = g'g''$ maps $u$ to $v$ and $g|_u = 1$. Let $h \in G$ be such that $h^f = g$. Then, recalling that the action on the tree is on the left, we have $hf(u)=f(f^{-1}hf)(u)=f(v)$. Thus,
	\[
		1 = g|_u = (h^f)|_u = f^{-1}|_{hf(u)} h|_{f(u)} f|_{u} = f^{-1}|_{f(v)} h|_{f(u)} f|_{u} =f|_v^{\,-1} h|_{f(u)} f|_u.
	\]
	Restricting to the label at the vertex~$u$ yields the second statement.
\end{proof}
%
%

\begin{lemma}\label{lem:descent}
	Let $G=\langle R, D\rangle$ be a spinal group directed along $\overline{0}$ and $H \leq \Aut T$ be reducing with respect to $(\mathcal N_{n})_{n \in \N}$. If there is some $f \in \Aut T$ such that $H^f = G$, then there is a positive integer $N$ such that for all $n \geq N$,
	\[
		\sigma^n  D \subseteq \mathcal N_n ^{\,f|_{0^n}}.
	\]
\end{lemma}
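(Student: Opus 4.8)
The plan is to push a single directed element through the conjugacy and read off its section at $0^n$ directly, rather than work with the group $G$ as a whole. Fix $d \in D$. Since $H^f = G$ means $G = f^{-1} H f$, the element $h_d := f d f^{-1}$ lies in $H$ and satisfies $d = f^{-1} h_d f = h_d^{\,f}$. The decisive structural input is that $G$ is directed along $\overline{0}$, so every $d \in D$ fixes the ray $\overline{0}$ pointwise; in particular $d(0^n) = 0^n$, and therefore $h_d$ fixes the level-$n$ vertex $f(0^n)$ for every $n$.

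Next I would compute the section $d|_{0^n}$ by means of the section identities recorded in Section~\ref{sec:prelim}. With $u = 0^n$ one obtains
\[
	d|_{0^n} = (f^{-1} h_d f)|_{0^n} = f^{-1}|_{h_d f(0^n)}\, h_d|_{f(0^n)}\, f|_{0^n}.
\]
Because $h_d$ fixes $f(0^n)$, the first factor equals $f^{-1}|_{f(0^n)} = (f|_{0^n})^{-1}$, so the expression collapses to the conjugate $\big(h_d|_{f(0^n)}\big)^{f|_{0^n}}$. This is the key computational point: the path-fixing is exactly what turns the generic three-term section formula — which otherwise involves the moved vertex $h_d f(0^n)$ — into an honest conjugation by the single label-section $f|_{0^n}$.

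I would then invoke the reducing hypothesis for $H$. Applied to $h_d$, it supplies an integer $N_d$ with $h_d|_w \in \mathcal N_n$ for every $w \in X^n$ and every $n \geq N_d$; since $f$ preserves levels, $f(0^n) \in X^n$ is an admissible choice of $w$, whence $h_d|_{f(0^n)} \in \mathcal N_n$ and so $d|_{0^n} \in \mathcal N_n^{\,f|_{0^n}}$ for all $n \geq N_d$. As $d$ ranges over $D$ the sections $d|_{0^n}$ range over precisely $\sigma^n D$, so it only remains to make the threshold uniform in $d$.

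The sole genuine obstacle is this uniformity, and it is dissolved by finiteness of $D$: the definition of reducing requires the nuclear sets to be finite, and Lemma~\ref{lem:mes reducing} already presents $G$ as reducing with nuclear sets $\sigma^n R \cup \sigma^n D$, so $\sigma^n D \cong D$ is finite. Setting $N := \max_{d \in D} N_d$, a maximum over finitely many integers, the inclusion $d|_{0^n} \in \mathcal N_n^{\,f|_{0^n}}$ then holds simultaneously for every $d \in D$ as soon as $n \geq N$, yielding $\sigma^n D \subseteq \mathcal N_n^{\,f|_{0^n}}$ for all $n \geq N$, as required.
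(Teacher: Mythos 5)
Your proof is correct and follows essentially the same route as the paper's: pull each $d \in D$ back to an element $h_d \in H$ that fixes $f(\overline{0})$, collapse the section formula to $d|_{0^n} = \big(h_d|_{f(0^n)}\big)^{f|_{0^n}}$, and apply the reducing hypothesis. You merely make explicit two points the paper leaves implicit, namely the cancellation $f^{-1}|_{h_d f(0^n)} = (f|_{0^n})^{-1}$ and the uniformity of the threshold $N$ over the finitely many elements of $D$.
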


\begin{proof}
	For every $d \in D$ there is some element $h(d) \in H$ such that $h(d)^f = d$. Since $H$ is reducing, there is a positive integer $N$ such that for all $d\in D$, we have $h(d)|_u \in \mathcal N_n$ for all $u \in X^n$ with $n\geq N$.
Since $d$ fixes $\overline 0$, the conjugate $h(d)$ must fix $f(\overline 0)$. Thus
	\[
		\sigma^n  d = d|_{0^n } = h(d)^f|_{0^n } = (h(d)|_{f(0^n )})^{f|_{0^n }}\in \mathcal N_n ^{\,f|_{0^n }}.\qedhere
	\]
\end{proof}

\begin{lemma}\label{lem:dir}
	Let $D$ and $\widetilde D$ be two directed groups defined by $\overline x, \omega$ and $\overline y, \widetilde \omega$ respectively. If there exists an automorphism $f \in \Aut T$ such that $\widetilde D^f = D$, then there exists an isomorphism $\iota: D \to \widetilde D$ such that for all $n \in \N$,
	\[
		\omega_n = (c(f|^{x^{n-1} z}))_{z \in X \setminus\{x\}} \circ p(f|^{x^{n-1}}) \circ\widetilde \omega_{n} \circ  \iota,
	\]
	where $c(\alpha)$ is the inner automorphism induced by $\alpha$ and $p(\alpha)$ is the relabelling of a direct product $\Sym(X)^{X\setminus\{y\}}$ by $\alpha$ for any $\alpha \in \Sym(X)$, i.e.\ the $k$\textsuperscript{th} component of $(\Sym(X)^{X\setminus\{y\}})^{p(\alpha)}=\Sym(X)^{X\setminus\{\alpha^{-1}(y)\}}$ is the $\alpha(k)$\textsuperscript{th} component of $\Sym(X)^{X\setminus\{y\}}$.
\end{lemma}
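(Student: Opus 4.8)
The plan is to let $\iota$ be conjugation by $f$ and to deduce the displayed identity from the label calculus of the preliminaries, once we know that $f$ carries the spine $\overline x$ of $D$ onto the spine $\overline y$ of $\widetilde D$. Since $\widetilde D^f = D$ is the same as $f D f^{-1} = \widetilde D$, conjugation $\iota\colon D \to \widetilde D$, $d \mapsto f d f^{-1}$, is a group isomorphism; writing $\widetilde d = \iota(d)$ we have $d = f^{-1}\widetilde d f$.

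The geometric core of the argument---and the step I expect to be the main obstacle---is to establish that $f(\overline x) = \overline y$, equivalently that $f(x^{n-1}) = y^{n-1}$ and $f|^{x^{n-1}}(x) = y$ for every $n$. I would obtain this from a description of the spine that is manifestly preserved by conjugation: namely, $\overline x$ is the unique ray $\rho = \rho_1\rho_2\cdots$ fixed by $D$ whose sections $D|_{\rho_1\cdots\rho_n}$ are all nontrivial. Indeed, along the spine $D|_{x^n} = \sigma^n D \cong D$ is nontrivial for all $n$ by the nondegeneracy assumption $\bigcap_{k \ge n}\ker(\omega_k) = 1$; while the defining recursion shows that for $z \ne x$ the section $d|_{x^{n-1}z} = \pi_z\omega_n(d)$ is rooted, so that $d|_{x^{n-1}zw} = 1$ for all $w \ne \varepsilon$. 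Hence any $D$-fixed ray leaving $\overline x$ has trivial sections from one level below its branch point onward, and $\overline x$ is singled out. Because $v$ being fixed by $D$ forces $f(v)$ to be fixed by $\widetilde D$ and yields $\widetilde D|_{f(v)} = f|_v\,(D|_v)\,(f|_v)^{-1}$, nontriviality of sections transfers through $f$; as each of $D$ and $\widetilde D$ has exactly one such ray, we conclude $f(\overline x) = \overline y$.

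Granting $f(\overline x) = \overline y$, the remainder is bookkeeping with labels. For $z \ne x$ we obtain $f(x^{n-1}z) = y^{n-1}w$ with $w = f|^{x^{n-1}}(z) \ne y$, a rib vertex of $\overline y$, so that $\widetilde d|^{f(x^{n-1}z)} = \pi_w\widetilde\omega_n(\widetilde d)$. Each rib vertex $u = x^{n-1}z$ is fixed by the directed automorphism $d$, since its labels on the spine are trivial; whence the composition rules $(gh)|^u = g|^{h(u)} h|^u$ and $f^{-1}|^u = (f|^{f^{-1}(u)})^{-1}$, applied to $d = f^{-1}\widetilde d f$, collapse to
\[
	d|^{x^{n-1}z} = (f|^{x^{n-1}z})^{-1}\, \widetilde d|^{f(x^{n-1}z)}\, f|^{x^{n-1}z}.
\]

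To finish I would assemble these componentwise equalities into the asserted tuple identity. Since $(f|^{x^{n-1}})^{-1}(y) = x$, the relabelling $p(f|^{x^{n-1}})$ turns the tuple $\widetilde\omega_n(\widetilde d)$, indexed by $X \setminus \{y\}$, into one indexed by $X \setminus \{x\}$ whose $z$-component is $\pi_{f|^{x^{n-1}}(z)}\widetilde\omega_n(\widetilde d) = \widetilde d|^{f(x^{n-1}z)}$; the componentwise inner automorphisms $c(f|^{x^{n-1}z})$ then supply the conjugation appearing in the displayed equation. As $\pi_z\omega_n(d) = d|^{x^{n-1}z}$, this is precisely $\omega_n = (c(f|^{x^{n-1}z}))_{z \in X\setminus\{x\}} \circ p(f|^{x^{n-1}}) \circ \widetilde\omega_n \circ \iota$, as required.
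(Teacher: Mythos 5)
Your proof is correct and follows essentially the same route as the paper's: take $\iota$ to be conjugation by $f$, establish $f(\overline x)=\overline y$, and then unwind the label/section calculus at the rib vertices $x^{n-1}z$ to obtain the displayed composition of $c$, $p$, $\widetilde\omega_n$ and $\iota$. The only difference is that you justify $f(\overline x)=\overline y$ in more detail (characterising the spine as the unique $D$-fixed ray all of whose sections are nontrivial), a step the paper dispatches in a single sentence.
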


\begin{proof}
	Denote by $\iota$ the isomorphism induced by $D = \widetilde D^f$. Since all elements of $D$ have labels only at distance $1$ from $\overline x$, respectively all elements of $\widetilde D$ have labels only at distance $1$ from $\overline y$, we have $f(\overline x) = \overline y$. For any $d \in D$, $z \in X$ and $n \in \N$, it follows that
		\begin{align*}
			\begin{rcases}
				\pi_{z}\omega_{n}(d) &\text{for } z \not = x\\
				\sigma^n d &\text{for } z = x
			\end{rcases}
			&= \sigma^{n-1} d|_{z} \\
			&= d|_{x^{n-1}z}\\
			&= \iota(d)^{f}|_{x^{n-1}z}\\
				&= (\sigma^{n-1} \iota(d))^{f|_{x^{n-1}}}|_{z}\\
			&=
			\begin{cases}
				\big(\pi_{f|^{i^{n-1}}(z)}\widetilde\omega_{n}(\iota(d))\big)^{f|_{x^{n-1} z}} &\text{for } f|^{x^{n-1}}(z) \not = y \Leftrightarrow z \neq x,\\
				\big(\sigma^n \iota(d)\big)^{f|_{x^{n-1} z}} &\text{for } f|^{x^{n-1}}(z) = y \Leftrightarrow z = x.
			\end{cases}
		\end{align*}
		Hence the result.
\end{proof}

\smallskip

\begin{proof}[Proof of \cref{Theorem}{thm:spinal}]
	By \cref{Lemma}{lem:mes reducing} and \cref{Lemma}{lem:descent} there is $N \in \N$ such that
	\[
		\sigma^n  D \subseteq (\sigma^n  \widetilde D)^{f|_{0^n }} \cup {(\sigma^n  \widetilde R)}^{f|_{0^n }},
	\]
	for all $n \geq N$. But since $\sigma^n  D \leq \St(1)$ for all such $n$, one obtains
	\(
		\sigma^n  D \leq (\sigma^n  \widetilde D)^{f|_{0^n }}.
	\)
	By symmetry (possibly increasing~$N$) we have equality for all $n \geq N$. By~\cref{Lemma}{lem:dir} we have
	\[
		\omega_n  = (c(f|^{0^{n-1}  x}))_{x \in X \setminus\{0\}} \circ p(f|^{0^{n-1}}) \circ \widetilde \omega_n  \circ  \iota
	\]
	for some isomorphism $\iota: D \to \widetilde D$ and all $n  \geq N$.
	By \cref{Lemma}{lem:cosets} we may write $f|^{0^{n-1}  x} = r_x f|^{0^n }$ for some $r_x \in \sigma^n  \widetilde R$. For all $x\in X\setminus \{0\}$, it follows that
	\begin{equation}\label{eq:1}
		\pi_x\omega_n (d) = (\pi_{f|^{0^{n-1}}(x)}\widetilde\omega_n  (\iota(d)))^{f|^{0^{n-1}  x}} = (\pi_{f|^{0^{n-1}}(x)}\widetilde\omega_n  (\iota(d)))^{r_x f|^{0^n}},
	\end{equation}
	implying $(\sigma^n \widetilde R)^{f|^{0^n}} = \sigma^n R$. Thus for every $n  \ge N$ we choose
	\begin{itemize}
		\item the inner automorphism of $\St_{\Sym(X)}(0)$ induced by $f|^{0^n }$ as $\phi_n $,
		\item the inner automorphisms of $\sigma^{n}\widetilde R$ induced by the $r_x$ as $\rho_n $, and
		\item the automorphism $p(f|^{0^{n-1} })$ induced by the permutation $f|^{0^{n-1} }$ as $\alpha_n $.
	\end{itemize}
	Then \eqref{eq:1} implies the statement.
\end{proof}

\subsection{Multi-EGS and multi-GGS groups}

Recall the group $\Gamma$, which is the inverse limit of $n$-fold iterated wreath products of $A_m = \langle (0\;1\;\cdots\; m-1) \rangle$.

\begin{lemma}\label{lem:conjugating spinal elements}
	Let $g, h \in \Gamma$ be directed elements along $\overline{x}$, and $f \in \Aut T$ such that $g^f \in \Gamma$ is directed along $\overline{y}$, for some $x, y \in X$. Then $h^f$ is directed along $\overline{y}$.
\end{lemma}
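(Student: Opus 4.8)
The plan is to track, for any automorphism $k \in \Aut T$, its \emph{section support} $B(k) = \{u \in X^* \mid k|_u \neq 1\}$, to show that $B$ transforms rigidly under conjugation, and then to recover the portrait support (the set of vertices with non-trivial \emph{label}) as the leaf set of $B$. The first step is the elementary transformation law $B(k^f) = f^{-1}(B(k))$, valid for all $k, f \in \Aut T$. This reduces to the equivalence $k^f|_u = 1 \Leftrightarrow k|_{f(u)} = 1$: if $k|_{f(u)} = 1$ then $k$ fixes $T_{f(u)}$, so in particular $k(f(u)) = f(u)$, whence $(kf)(u) = f(u)$, and the section identities give $k^f|_u = f^{-1}|_{(kf)(u)}\, k|_{f(u)}\, f|_u = (f|_u)^{-1} f|_u = 1$. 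The reverse implication follows by applying the same statement to $k^f$ and $f^{-1}$, using $(k^f)^{f^{-1}} = k$.

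Next I would describe $B$ for a directed element. If $k$ is directed along $\overline{x}$, its labels are non-trivial only at the distance-one vertices $x^i z$ with $z \neq x$, of which there are infinitely many; taking prefix closures yields $B(k) = \overline{x} \cup \{x^i z \mid z \neq x,\ k|^{x^i z} \neq 1\}$, a copy of the ray $\overline{x}$ carrying hairs of length one. Thus $B(k)$ is a subtree whose unique infinite branch is $\overline{x}$ and whose leaves are exactly the portrait support of $k$. Applying this to $g$ and to $g^f$ and combining with the transformation law gives the path correspondence: the unique infinite branch of $B(g^f) = f^{-1}(B(g))$ is $f^{-1}(\overline{x})$, while, $g^f$ being directed along $\overline{y}$, it is also $\overline{y}$; hence $f(\overline{y}) = \overline{x}$. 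Consequently $f$ maps $\overline{y}$ onto $\overline{x}$, and since $f|^{y^i}(y) = x$, it carries the distance-one vertices off $\overline{y}$ bijectively onto those off $\overline{x}$.

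Finally I would run the argument for $h$. The transformation law gives $B(h^f) = f^{-1}(B(h)) = \overline{y} \cup f^{-1}(A)$, where $A$ denotes the (distance-one, off $\overline{x}$) portrait support of $h$; by the path correspondence $f^{-1}(A)$ consists of distance-one vertices off $\overline{y}$, and it is infinite. It remains to check that the portrait support of $h^f$ avoids the path and coincides with these leaves. For the path I would compute $h^f|_{y^i} = (h|_{x^i})^{f|_{y^i}}$ (using $f(y^i) = x^i$ and $h(x^i) = x^i$), whose root label is the conjugate of $h|^{x^i} = 1$, hence trivial, so $h^f$ has trivial labels along $\overline{y}$. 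For the leaves I would use the general fact that a leaf of $B(h^f)$ carries a non-trivial rooted section, hence a non-trivial label, and so lies in the portrait support. Thus the portrait support of $h^f$ is exactly $f^{-1}(A)$, an infinite set of distance-one vertices off $\overline{y}$, so $h^f$ is directed along $\overline{y}$.

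I expect the main obstacle to be the bookkeeping that upgrades the clean statement about the section supports $B(\cdot)$ to the required statement about portrait supports — in particular, verifying that $h^f$ has trivial labels all along $\overline{y}$ and that no portrait support is lost or gained off the path — since conjugation by $f$ does not act simply on labels, only on sections. The rigidity of $B$ under conjugation is what makes this manageable, so I would be careful to phrase everything through $B$ and invoke the leaf/label dictionary only at the very end.
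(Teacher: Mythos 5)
The proof collapses at its first step: the ``elementary transformation law'' $B(k^f)=f^{-1}(B(k))$ is false. Your justification infers from $k|_{f(u)}=1$ that ``$k$ fixes $T_{f(u)}$, so in particular $k(f(u))=f(u)$'', but a trivial section at $f(u)$ only means that $k$ carries $T_{f(u)}$ onto $T_{k(f(u))}$ without internal twisting; it says nothing about whether the vertex $f(u)$ itself is fixed. When $k(f(u))\neq f(u)$ one gets $k^f|_u=(f|_{k^f(u)})^{-1}\,f|_u$, which is trivial only if $f$ has equal sections at $u$ and at $k^f(u)$. For instance, on $T_2$ with $a=(0\;1)$ rooted and $f$ the automorphism with trivial root label and sections $f|_0=s\neq 1$, $f|_1=1$, the conjugate $a^f$ has non-trivial sections at both first-level vertices, so $B(a^f)\supsetneq f^{-1}(B(a))=\{\varnothing\}$. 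This is exactly the situation at the hairs: $h^f|_{f^{-1}(x^iz)}$ is a conjugate of the \emph{rooted} element $h|_{x^iz}$, and such a conjugate is rooted only if the sections of $f|_{f^{-1}(x^iz)}$ are constant on the orbits of the relevant permutation; otherwise non-trivial labels of $h^f$ leak to distance at least two from $\overline y$ and directedness fails. Your argument never confronts this, and in particular never uses the hypothesis $g,h,g^f\in\Gamma$.

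The gap is not repairable by better bookkeeping, because this leakage is the entire content of the lemma. On $T_3$ let $g$ and $h$ be the directed elements with first-level sections $(g,a,1)$ and $(h,1,a)$, and let $f$ have sections $(f,1,e)$ with $e\in\St(1)$ having unequal first-level sections, say $(a,1,1)$. Then $g^f=g$ is directed and lies in $\Gamma$, while $h^f$ has first-level sections $(h^f,1,a^e)$ with $a^e$ not rooted, so $h^f$ carries a non-trivial label at the vertex $20$ and is directed along no constant path. Thus the statement your argument actually establishes is false (and this example even meets the literal hypotheses of the lemma, which shows the result is really being used in the paper in a situation where the labels of $g$ are non-trivial generators of $A_m$ at every distance-one vertex). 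The paper's proof works at the level of labels: from the fact that $g^f$ is directed \emph{and lies in $\Gamma$} it extracts that each $f|_{x^iz}$ conjugates the rooted copy of $A_m$ into itself, and only then concludes that the hair-sections of $h^f$ are again rooted. Any correct argument must obtain such a constraint on the sections of $f$ off the spine; the section-support calculus alone cannot deliver it.
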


\begin{proof}
	Without loss of generality one can consider the case $x = y = 0$. Since $g^f|_0$ is directed, the element $f$ stabilises~$0$. For any $x \in X\setminus \{0\}$ there is $n \in \Z/m\Z$ such that
	\[
		g^f|_x = (g|_{f(x)})^{f|_x} = ((0\; 1 \cdots m-1)^n)^{f|_x}.
	\]
	Since $g^f$ is directed and a member of $\Gamma$,
	\[
		g^f|_x \in A_m,
	\]
	hence $f|_x$ normalises $A_m$. Thus
	\[
		h^f|_x = \begin{cases}
			(h|_0)^{f|_0} &\text{ if }x = 0,\\
			(h|_{f(x)})^{f|_x} \in A_m &\text{ otherwise.}
		\end{cases}
	\]
	Repeating this argument for $g|_0, h|_0$ and $f|_0$ shows that $h^f$ fixes $\overline 0$ and has non-trivial labels only at vertices of distance $1$ to this ray. Thus $h^f$ is directed along~$\overline 0$, as required.
\end{proof}

Recall that for a multi-EGS group $G$, for $i\in X$ the directed groups $D^{(i)}$ are direct products of cyclic groups of order~$m$ and the rooted group is equal to~$A_m$.

\begin{proposition}\label{prop:mEGSness}
	Let $G$ and $\widetilde G$ be multi-EGS groups defined by $\omega$ and $\widetilde \omega$, respectively, such that $\widetilde G^f = G$ for an element $f \in \Aut T$. Then 
	for $i \in [0,r)$, there exist
   	\begin{itemize}
   		\item automorphisms $\alpha^{(i)}$ of $A_m^{\,X\setminus\{\widetilde x^{(i)}\}}$ permuting the direct factors by an element of $\Norm_{\Sym(X)}(A_m)\cap\St_{\Sym(X)}(\widetilde x^{(i)})$,
		\item a map $\theta: [0,r) \to [0, \widetilde r)$ such that $\rank D^{(i)} = \rank \widetilde D^{(\theta(i))}$, and
   		\item isomorphisms $\iota^{(\theta(i))}: D^{(i)}\rightarrow \widetilde D^{(\theta(i))}$ such that
   	\end{itemize}
	\[
		\omega^{(i)} = \alpha^{(\theta(i))} \circ \widetilde \omega^{(\theta(i))} \circ  \iota^{(\theta(i))}.
	\]
\end{proposition}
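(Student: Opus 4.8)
The plan is to combine the descent and directed-comparison lemmas, exploiting that a multi-EGS group is contracting with a very explicit nucleus. Since the defining sequences $\omega^{(i)}$ are constant and $\sigma^n R = A_m$ for all $n$, \cref{Lemma}{lem:mes reducing} shows that $\widetilde G$ is contracting with nucleus $\widetilde{\mathcal N} = A_m \cup \bigcup_{j} \widetilde D^{(j)}$, and likewise for $G$. First I would run the argument of \cref{Lemma}{lem:descent} on each directed group $D^{(i)}$ of $G$ separately: every $d \in D^{(i)}$ fixes its path $\overline{x^{(i)}}$, so the same computation gives, for all large $n$, that the section $\sigma^n d = d|_{(x^{(i)})^n}$ lies in $\widetilde{\mathcal N}^{\,f|_{(x^{(i)})^n}} = \bigl(A_m \cup \bigcup_j \widetilde D^{(j)}\bigr)^{f|_{(x^{(i)})^n}}$.

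Next I would pin down a single directed factor. As a section of a directed element, $\sigma^n d$ is itself directed, hence lies in $\St(1)$; but every nontrivial element of $A_m$ moves the first layer, and conjugation preserves this, so the rooted part of the nucleus is ruled out and $\sigma^n d \in \bigl(\bigcup_j \widetilde D^{(j)}\bigr)^{f|_{(x^{(i)})^n}}$. Because the groups $\widetilde D^{(j)}$ are directed along the distinct paths $\overline{\widetilde x^{(j)}}$, a single nontrivial directed element can belong to only one conjugated factor, and \cref{Lemma}{lem:conjugating spinal elements} guarantees that all of $D^{(i)}$ is carried into the same factor. Fixing one sufficiently large level $N$ (simultaneously for all $i$), this defines the map $\theta$ by $\theta(i) = j$, and a symmetry argument upgrades the inclusion to an equality of groups $D^{(i)} = (\widetilde D^{(\theta(i))})^{f|_{(x^{(i)})^N}}$. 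This equality furnishes the isomorphism $\iota^{(\theta(i))} : D^{(i)} \to \widetilde D^{(\theta(i))}$ and, since the $\omega$-sequences are constant, the rank equality $\rank D^{(i)} = \rank \widetilde D^{(\theta(i))}$.

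It then remains to compare the defining homomorphisms. I would apply \cref{Lemma}{lem:dir} to the matched pair, with conjugator $f' = f|_{(x^{(i)})^N}$, which expresses $\omega^{(i)}$ (constant in the level, so the subscript drops) as a tuple of inner automorphisms of $\Sym(X)$ composed with a relabelling $p(\cdot)$ of the direct factors, applied to $\widetilde\omega^{(\theta(i))} \circ \iota^{(\theta(i))}$. The decisive simplification is that we work inside $\Gamma$: by the argument of \cref{Lemma}{lem:conjugating spinal elements} the relevant labels of $f'$ normalise $A_m$, so each componentwise inner automorphism restricts to an automorphism of $A_m$, and the relabelling permutation lies in $\Norm_{\Sym(X)}(A_m)$ and carries $x^{(i)}$ to $\widetilde x^{(\theta(i))}$. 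Bundling the componentwise automorphisms together with the factor permutation yields the single automorphism $\alpha^{(\theta(i))}$ of $A_m^{\,X\setminus\{\widetilde x^{(\theta(i))}\}}$ permuting the direct factors by an element of $\Norm_{\Sym(X)}(A_m)\cap\St_{\Sym(X)}(\widetilde x^{(\theta(i))})$, and the identity $\omega^{(i)} = \alpha^{(\theta(i))} \circ \widetilde\omega^{(\theta(i))} \circ \iota^{(\theta(i))}$ follows.

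The main obstacle is the path-matching step: showing that $\theta$ is well defined, i.e.\ that the whole of $D^{(i)}$ — not merely individual elements for individual levels — is conjugated into a single $\widetilde D^{(j)}$. This rests on reducing the nucleus to its directed part using $\St(1)$ and on the rigidity provided by \cref{Lemma}{lem:conjugating spinal elements}, together with fixing one common level $N$. The accompanying technical point is verifying that the conjugating labels normalise $A_m$, since this is exactly what collapses the generic inner automorphisms and relabelling of \cref{Lemma}{lem:dir} into a structured automorphism of the abelian power $A_m^{\,X\setminus\{\widetilde x^{(\theta(i))}\}}$ permuting factors by a normalising, path-fixing permutation; outside $\Gamma$ no such collapse is available.
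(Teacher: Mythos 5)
Your overall architecture is the same as the paper's: use \cref{Lemma}{lem:mes reducing} and \cref{Lemma}{lem:descent} to push $D^{(i)}$ into the conjugated nucleus of $\widetilde G$ along $\overline{x^{(i)}}$, discard the rooted part via $\St(1)$, identify a single factor $\widetilde D^{(\theta(i))}$, upgrade to an equality of groups, and then feed the result into \cref{Lemma}{lem:dir}. Your route to the well-definedness of $\theta$ (via \cref{Lemma}{lem:conjugating spinal elements}: two elements of $D^{(i)}$ directed along the same path must be deconjugated into factors directed along the same path) is different from but equivalent to the paper's, which instead computes a first-level section of $e^{f|_{x^n}}$ and reads off $\widetilde x^{(\theta(i))} = f|^{x^n}(x^{(i)})$ directly. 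That part is fine.

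The genuine gap is the sentence ``a symmetry argument upgrades the inclusion to an equality of groups.'' This is the substantive half of the proof and it does not follow from symmetry alone. Running your argument for $f^{-1}$ yields $\widetilde D^{(j)} \subseteq (D^{(\widetilde\theta(j))})^{f^{-1}|_{(\widetilde x^{(j)})^{n'}}}$, where the conjugator is a section of $f^{-1}$ at a vertex \emph{on the path} $\overline{\widetilde x^{(j)}}$; but the inverse of your forward conjugator is $f^{-1}|_{f((x^{(i)})^{N})}$, a section at $f((x^{(i)})^{N})$, which need not lie on that path (one only knows $f((x^{(i)})^{N})=f((x^{(i)})^{n})\,(\widetilde x^{(\theta(i))})^{N-n}$, not that it is a power of $\widetilde x^{(\theta(i))}$). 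So the two inclusions do not compose, and the cardinality escape route would require $\widetilde\theta\circ\theta=\mathrm{id}$, which you have not established. The paper instead proves the reverse inclusion directly: for $e\in\widetilde D^{(\theta(i))}$ it uses \cite[Lemma 3.3]{Moritz} and fractalness to get $e^{f|_{x^{n}}}\in G$, then \cref{Lemma}{lem:conjugating spinal elements} to see this element is $\overline{x^{(i)}}$-spinal, then descends a further $k(e)$ levels into $D^{(i)}$ and takes the maximum of $k(e)$ over the finite group. A second, smaller issue is your last step: the componentwise inner automorphisms produced by \cref{Lemma}{lem:dir} are conjugations by the labels $f|^{x^{N-1}z}$, which a priori differ with $z$; you need \cref{Lemma}{lem:cosets} (these labels agree modulo $A_m$, and $A_m$ is abelian) to see they all restrict to one and the same power map $c\mapsto c^{k}$ on $A_m$, which the paper then absorbs into $\iota^{(\theta(i))}$ so that $\alpha^{(\theta(i))}$ is a pure permutation of the factors — the form needed for the sufficiency direction in \cref{Lemma}{lem:sufficient}. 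Simply ``bundling'' possibly distinct componentwise automorphisms into $\alpha^{(\theta(i))}$ does not deliver the automorphism in the shape the statement asserts.
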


\begin{proof}
	Let $i \in [0,r)$. Then the group generated by $A_m$ and $D^{(i)}$ is a multi-GGS group. Write $x$ for $x^{(i)}$, for $x\in X$. By \cref{Lemma}{lem:descent} and \cref{Lemma}{lem:mes reducing}, recalling that $\sigma D^{(i)}=D^{(i)}$ and $\sigma R=R=A_m$ for a multi-EGS group, there is some $n \in \N$ such that 
	\[
		 D^{(i)} \subseteq \left( A_m\cup \bigcup_{j = 0}^{\widetilde r}  \widetilde D^{(j)} \right)^{f|_{x^n}}.
	\]
	Since $D^{(i)}$ stabilises the first layer, in fact
	\[
		D^{(i)} \subseteq \left(\bigcup_{j = 0}^{\widetilde r}  \widetilde D^{(j)} \right)^{f|_{x^n}}.
	\]
	Let $d \in D^{(i)}$ and $e \in \widetilde D^{(j)}$, for some $j \in [0, \widetilde r)$,  be such that $e^{f|_{x^n}} = d$. Then
	\[
		d = d|_x = (e^{f|_{x^n}})|_x = (e|_{f|^{x^n}(x)})^{f|_{x^{n+1}}}.
	\]
Now write $y$ for $\widetilde x^{(j)}$.
Since there is only one non-trivial first layer section of $e$ stabilising the first layer, this implies $y= f|^{x^n}(x)$. Defining $\theta(i) = j$ we have
	\[
		D^{(i)} \leq (\widetilde D^{(\theta(i))})^{f|_{x^n}}.
	\]
	Now let $e \in \widetilde D^{(\theta(i))}$. By  \cite[Lemma 3.3]{Moritz} and the fact that multi-EGS groups are fractal, we obtain $e^{f|_{x^n}} \in G$. However by \cref{Lemma}{lem:conjugating spinal elements} it follows that since there are elements 
	$e' \in \widetilde D^{(\theta(i))}$ such that $e'^{f|_{x^n}} \in D^{(i)}$ is $\overline x$-spinal, the element $e^{f|_{x^n}}$ is $\overline x$-spinal. Hence by \cref{Lemma}{lem:mes reducing} there is a positive integer $k(e)$ such that $(e^{f|_{x^n}})|_{x^{k(e)}} \in D^{(i)}$. Thus
	\[
		(e^{f|_{x^n}})|_{x^{k(e)}} = (e|_{f|^{x^n}(x^{k(e)})})^{f|_{x^{n + {k(e)}}}} \in \St_G(1)\setminus\{1\},
	\]
	hence $f|^{x^n}(x^{k(e)}) = y^{k(e)}$ and $(e^{f|_{x^n}})|_{x^{k(e)}} = e^{f|_{x^{n + {k(e)}}}} \in D^{(i)}$. 
	
	Set $k_{\mathrm{max}} = \max \{k(e) \mid e \in \widetilde D^{(\theta(i))} \}$ to obtain
	\[
		(\widetilde D^{(\theta(i))})^{f|_{x^{n + k_{\mathrm{max}}}}} \leq D^{(i)}.
	\]
	Hence $D^{(i)} = (\widetilde D^{(\theta(i))})^{f|_{x^{n  + k_{\mathrm{max}}}}}$. Taking further sections it is clear that 
	\[
		D^{(i)} = (\widetilde D^{(\theta(i))})^{f|_{x^k}}
	\]
	for all $k \geq n + k_{\mathrm{max}}$.

	Since all directed groups involved are abelian and both rooted groups are equal and cyclic, we have by \cref{Lemma}{lem:dir} that for all $i \in [0, r)$,
	\[
		\omega^{(i)} = (\phi^{(\theta(i))})^{X \setminus \{x\}} \circ \alpha^{(\theta(i))} \circ \widetilde \omega^{(\theta(i))} \circ  \iota^{(\theta(i))}
	\]
	with $\alpha^{(\theta(i))}$ as desired, and $\phi^{(\theta(i))} \in \Norm_{\Sym(X)}(A_m) \cap \St(x) \cong (\Z/m\Z)^{\times}$. Hence, writing $c=(0\;1 \;\cdots \;m-1)$, we have $\phi^{(\theta(i))}(c) = c^{k^{(\theta(i))}}$ for some $k^{(\theta(i))}$ coprime to $m$. 
	Therefore, replacing $\iota^{(\theta(i))}$ with
	\[
	\hat\iota^{(\theta(i))}=\mu_{k^{(\theta(i))}}\circ \iota^{(\theta(i))}:D^{(i)}\rightarrow \widetilde D^{(\theta(i))}
	\]
	where $\mu_{k^{(\theta(i))}}(\iota^{(\theta(i))}(d))=(\iota^{(\theta(i))}(d))^{k^{(\theta(i))}}$, we obtain
	\[
		\omega^{(i)} =  \alpha^{(\theta(i))} \circ\widetilde \omega^{(\theta(i))} \circ  \hat\iota^{(\theta(i))},
	\]
	as required.
\end{proof}

\begin{lemma}\label{lem:sufficient}
	Let $G
	$ be a multi-GGS group defined by $\omega$, and let $\iota \in\Aut(D)$, and $\alpha \in \Norm_{\Sym(X)}(A_m) \cap \St_{\Sym(X)}(0)$. Then the multi-GGS group $\widetilde G$ defined by $p(\alpha) \circ\omega \circ  \iota$ is conjugate to $G$.
\end{lemma}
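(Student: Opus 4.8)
The plan is to exhibit an explicit conjugator and to verify that it carries the two kinds of generators of $G$ onto those of $\widetilde G$. Take $f = \kappa(\alpha)$, the automorphism of $T$ with constant portrait induced by~$\alpha$. Since $\alpha \in \St_{\Sym(X)}(0)$, the automorphism $f$ fixes the ray $\overline 0$, which is the directed path of both $G$ and $\widetilde G$. Recall also that $f^{-1} = \kappa(\alpha^{-1})$ and that $f|_v = \kappa(\alpha)$, $f^{-1}|_v = \kappa(\alpha^{-1})$ at \emph{every} vertex~$v$. I would then claim $G^f = \widetilde G$ and prove it by analysing the rooted and directed generators separately.

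For the rooted part, a direct computation with the section identities shows that conjugating the rooted automorphism with root-label $\sigma_0 \in A_m$ by $\kappa(\alpha)$ yields the rooted automorphism with label $\alpha^{-1}\sigma_0\alpha$. As $\alpha$ normalises $A_m$, conjugation by $\alpha$ permutes $A_m$, so $A_m^{\,f} = A_m$ and the rooted groups coincide. For the directed part, fix $d \in D$ and use that $\omega$ is constant (since $G$ is multi-GGS), so that $d|_0 = d$, while $d|_y = \pi_y\omega(d) \in A_m$ for $y \neq 0$, and $d \in \St(1)$. Using $(d^f)|_y = f^{-1}|_{\alpha(y)}\, d|_{\alpha(y)}\, f|_y$ (recalling $h^f = f^{-1}hf$ and the left action), the case $y = 0$ gives $(d^f)|_0 = \kappa(\alpha^{-1})\,d\,\kappa(\alpha) = d^f$, so $d^f$ is again directed along $\overline 0$; and for $y \neq 0$ one obtains the rooted label $(\pi_{\alpha(y)}\omega(d))^{\alpha}$.

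Now comes the bookkeeping that is the genuine content. Since $\alpha$ normalises the cyclic group $A_m = \langle c \rangle$ with $c = (0\;1\;\cdots\;m-1)$, conjugation by $\alpha$ acts on $A_m$ as $c \mapsto c^{k}$ for some $k$ coprime to~$m$, so the label above equals $(\pi_{\alpha(y)}\omega(d))^{k}$. Hence $d^f$ is the directed element with data $\omega'$ given by $\omega'(d)_y = (\pi_{\alpha(y)}\omega(d))^{k}$, and since $\omega$ is a homomorphism into an abelian group we may rewrite this as
\[
	\omega' = p(\alpha) \circ \omega \circ \mu_k,
\]
where $\mu_k \colon d \mapsto d^{k}$ is an automorphism of $D$ (as $D$ is a direct product of cyclic groups of order~$m$ and $\gcd(k,m) = 1$), satisfying $\omega(\mu_k(d)) = \omega(d)^{k}$. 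The key observation is that for a \emph{constant} sequence $\omega$ and any $\psi \in \Aut(D)$ one has $d_{\omega \circ \psi, 0} = \psi(d)_{\omega, 0}$ by the recursive uniqueness of directed elements, whence $D_{\omega \circ \psi, 0} = D_{\omega, 0}$ as subgroups of $\Aut T$, independently of~$\psi$. Applying this twice yields
\[
	D^f = D_{p(\alpha) \circ \omega \circ \mu_k, 0} = D_{p(\alpha) \circ \omega, 0} = D_{p(\alpha) \circ \omega \circ \iota, 0} = \widetilde D,
\]
so that in fact $f = \kappa(\alpha)$ conjugates $D$ onto $\widetilde D$ regardless of the chosen~$\iota$. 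Combined with $A_m^{\,f} = A_m$, and using that conjugation by $f$ is an automorphism of $\Aut T$, I conclude
\[
	G^f = \langle A_m, D \rangle^f = \langle A_m^{\,f}, D^f \rangle = \langle A_m, \widetilde D \rangle = \widetilde G.
\]

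The step I expect to be the main obstacle is precisely the last two paragraphs: conjugation by $\alpha$ contributes an \emph{unavoidable} power map $\mu_k$ on the labels in addition to the intended coordinate permutation $p(\alpha)$, and one must recognise that both this $\mu_k$ and the prescribed $\iota$ dissolve at the level of subgroups via the $\Aut(D)$-invariance of $D_{\omega,0}$. The remaining delicate points are purely of the careful-computation variety: keeping the composition order straight under the convention $h^f = f^{-1}hf$ with the left action, and confirming the recursion $(d^f)|_0 = d^f$ so that $d^f$ indeed remains directed along $\overline 0$ with its non-trivial labels only at distance~$1$ from the ray.
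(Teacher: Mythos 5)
Your proof is correct and follows essentially the same route as the paper: conjugate by $\kappa(\alpha)$, observe that the resulting power map $\mu_k$ on labels and the prescribed $\iota$ both dissolve because $D_{\omega\circ\psi,0}=D_{\omega,0}$ for any $\psi\in\Aut(D)$, and check the rooted group is preserved. Your treatment of the rooted generators is in fact slightly more careful than the paper's (which asserts $\kappa$ centralises rooted elements, whereas in general it only normalises $A_m$, as you correctly compute).
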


\begin{proof}
	The multi-GGS group defined by $\omega \circ \iota$ is equal to $G$, as for any $d \in D$,
	\[
		\iota(d)_\omega = d_{\omega \circ \iota }
	\]
	and vice versa.
	
	Let $\kappa = \kappa(\alpha)$ be the automorphism with constant portrait $\alpha$. Then
	\[
		d^\kappa = (d^\kappa, (d|_{1^\alpha})^{\kappa}, \dots, (d|_{(m-1)^\alpha})^{\kappa})
	\]
	for all $d \in D$, hence $d^\kappa = d_{ p(\alpha) \circ\omega \circ \hat\iota}$, using the notation $\hat\iota$ from the previous lemma. Since $\kappa$ centralises rooted elements, we have $G^\kappa = \widetilde G$.
\end{proof}

\begin{proof}[Proof of \cref{Theorem}{thm:mGGS}]
	The necessarity of the condition is a direct consequence of \cref{Proposition}{prop:mEGSness}. The sufficiency follows from \cref{Lemma}{lem:sufficient}.
\end{proof}

\section{Finitely generated non-spinal branch groups}\label{sec:non-spinal}

We now prove that one of the (finitely generated branch) \emph{Extended Gupta--Sidki groups} defined by Pervova~\cite{Per07} is not isomorphic to a weakly branch spinal group. The attentive reader will notice that this by far is not the only example for a group with this property within the class of polyspinal groups.

Let $a = (0 \; 1 \; 2)$ be rooted and define $b = (b, a, a^2), c = (a^2, c, a)$. The group 
$G = \langle a, b, c \rangle$ is a polyspinal group with defining data $R = \langle a
\rangle \cong A_3$ and $\omega^{(0)}_n = (1: x \mapsto x, \,2: x \mapsto x^{-1})$, $\omega^{(1)}_n = (0: x \mapsto x^{-1}, \,2: x \mapsto x)$ for all $n \in \N$, where the letter in front of a colon signifies the component of the homomorphism after it. Both $\langle a, b \rangle$ and $\langle a, c \rangle$ are isomorphic to the Gupta--Sidki $3$-group. We record some of the results of~\cite{Per07} in a lemma.

\begin{lemma}\label{lem:egs}
The group~$G$ is a just infinite torsion branch group.
\end{lemma}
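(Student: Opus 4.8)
The plan is to verify the three asserted properties separately, leaning on the reducing property established in \cref{Lemma}{lem:mes reducing} and on the fact, proved above, that polyspinal groups are fractal; alternatively, since $\langle a,b\rangle$ and $\langle a,c\rangle$ are both the Gupta--Sidki $3$-group, one simply imports everything from \cite{Per07}. Throughout, spherical transitivity of $G$ is immediate: the rooted generator $a = (0\;1\;2)$ acts transitively on the first layer~$X$, and fractality propagates transitivity to every layer by induction. Since $G$ is finitely generated and $\Aut T / \St(n)$ is a finite iterated wreath product, each $\St_G(n)$ already has finite index in $G$, a fact I use freely below.

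For \emph{torsion}, I would run the classical Gupta--Sidki length-reduction argument. Any $g \in G$ has root label a power of $a$, so $g^3 \in \St_G(1)$; writing $g^3 = (h_0, h_1, h_2)$ with each $h_i \in G$ by fractality, the reducing property of \cref{Lemma}{lem:mes reducing} forces the syllable lengths of the $h_i$ to be strictly smaller than that of $g$ whenever $\operatorname{syl}(g) > 1$. An induction on syllable length then shows every $g$ has finite order, the base cases being short elements lying in the copies $\langle a, b\rangle$ and $\langle a, c\rangle$ of the Gupta--Sidki $3$-group, which is classically known to be periodic. The only care needed is to check that conjugation into $\St_G(1)$ and the passage $g \mapsto g^3$ do not inflate the length measure beyond what the reduction recovers; this is exactly the bookkeeping carried out by Pervova.

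For the \emph{branch} property, I would first exhibit nontrivial rigid vertex stabilisers. Computing first-layer sections of suitable commutators — for instance $[b,c]$ and its conjugates — produces a nontrivial element whose portrait is supported in a single maximal subtree, so $\rst_G(0) \neq 1$, and spherical transitivity spreads this to every $\rst_G(u)$, giving that $G$ is weakly branch. To upgrade to branch, the key step is to locate a finite-index subgroup $K \leq G$ (a natural candidate is $G'$, or $\St_G(1)'$) satisfying the geometric containment $K \times K \times K \leq K$ under the identification of $\St(1)$ with a subgroup of $(\Aut T)^3$; such a containment, proved by direct section computations, yields by induction that $\Rst_G(n)$ contains a finite-index subgroup of $G$ for every $n$, which is the definition of branch. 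I expect this containment to be the \textbf{main obstacle}, as it requires explicit manipulation of the generators $b = (b,a,a^2)$ and $c = (a^2,c,a)$ and verifying that enough commutators survive the projection into a single coordinate.

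Finally, \emph{just infiniteness} follows from the branch structure together with the standard criterion for branch groups (see \cite{BGS03}): a branch group is just infinite once each rigid vertex stabiliser $\rst_G(u)$ has finite-index commutator subgroup in itself. Concretely, I would show that any nontrivial normal subgroup $N \trianglelefteq G$ contains a rigid level stabiliser: picking $1 \neq g \in N$ acting nontrivially at some vertex, the commutators $[g, \rst_G(u)]$ lie in $N$ and, by the branchness computation above, generate a finite-index subgroup of some $\Rst_G(n)$. Since $\Rst_G(n)$ has finite index, so does $N$, and hence $G$ is just infinite. As noted, all three conclusions may alternatively be quoted directly from \cite{Per07}, which is the route recorded in the statement.
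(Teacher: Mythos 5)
The paper gives no proof of this lemma at all: it explicitly ``record[s] some of the results of~\cite{Per07}'', so the route you mention at the start and end of your proposal --- simply importing torsion, branchness and just infiniteness from Pervova --- is exactly what the authors do, and is fully adequate. Your direct sketch is the standard outline for such groups and would work, but one step is not correct as literally written: you claim that writing $g^3=(h_0,h_1,h_2)$, ``the reducing property of \cref{Lemma}{lem:mes reducing} forces the syllable lengths of the $h_i$ to be strictly smaller than that of $g$''. That lemma only bounds \emph{second-level} sections of an element by its own syllable length, whereas the $h_i$ are \emph{first-level} sections of $g^3$, whose syllable length may be as large as $3\operatorname{syl}(g)$; in the classical Gupta--Sidki argument the first-level sections of $g^3$ (for $g=ha$ with $h\in\St(1)$) are cyclic products of the sections of $h$ and need not be strictly shorter than $g$, which is precisely why the periodicity proof requires a finer induction (on a weighted length, with a separate treatment of the non-decreasing case). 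You do flag this as ``bookkeeping carried out by Pervova'', so the gap is acknowledged rather than hidden, but a self-contained proof would have to supply that induction rather than cite \cref{Lemma}{lem:mes reducing}. The branch step (finding $K$ of finite index with $K\times K\times K\leq K$ under the usual identification, e.g.\ $K=G'$) and the deduction of just infiniteness from the finiteness of $\rst_G(u)/\rst_G(u)'$ are the standard arguments and match what \cite{Per07} and \cite{BGS03} actually prove; since the paper itself delegates all of this to the literature, your citation fallback is the faithful reconstruction and your sketch is a (mostly correct) bonus.
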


We now show the following.

\begin{lemma}\label{lem:EGS not conjugate to spinal}
	The group~$G$ is not conjugate to any spinal group $S \leq \Aut T_3$.
\end{lemma}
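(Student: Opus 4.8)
The plan is to argue by contradiction, using the newly established necessary condition for spinal groups to be conjugate, namely \cref{Theorem}{thm:spinal}, together with the specific arithmetic structure of the defining data of~$G$. Suppose $G$ is conjugate, via some $f \in \Aut T_3$, to a spinal group $S = \langle \widetilde R, \widetilde D\rangle$ acting on $T_3$. The key structural feature of~$G$ to exploit is that it is a genuine \emph{polyspinal} group with $r = 2$ directed generators~$b$ and~$c$ associated to the \emph{two distinct} directed paths $\overline 0$ and $\overline 1$, whereas a spinal group has only $r = 1$. The heart of the argument is thus to show that this ``two directed rays'' feature is an invariant that cannot be reconciled with the single directed ray of a spinal group.

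The first step is to invoke \cref{Lemma}{lem:descent} and \cref{Lemma}{lem:mes reducing} to pin down where the directed generators of~$G$ must land under the conjugating section. Since $G$ is reducing with respect to $\big(\sigma^n R \cup \bigcup_i \sigma^n D^{(i)}\big)_n$, and since the generators $b$ and $c$ are directed along the distinct rays $\overline 0$ and $\overline 1$ respectively, the element $f$ must carry these two rays to directed rays of~$S$. But in the spinal group $S$ there is, up to the congruence afforded by \cref{Lemma}{lem:cosets}, essentially one directed ray, which by \cref{Remark}{rem:paths} we may take to be~$\overline 0$. So I would track both $b$ and $c$ through the conjugation: by \cref{Lemma}{lem:conjugating spinal elements}, a directed element of~$\Gamma$ along~$\overline x$ must be sent to a directed element along a single ray~$\overline y$ determined by~$f$, and all the directed generators stabilising a given ray of~$G$ must map coherently. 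The second step is then to derive the contradiction: $b$ is directed along~$\overline 0$ and $c$ along~$\overline 1$, two genuinely different rays (their non-trivial sections sit over disjoint first-level vertices), yet after conjugation both would have to be directed elements of the single directed group~$\widetilde D$ of~$S$, forcing a single common ray. This is impossible because $f$ is a bijection on rays and cannot identify the images of~$\overline 0$ and~$\overline 1$.

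To make the above rigorous I would apply the conclusion of \cref{Theorem}{thm:spinal} directly: if $G^f = S$ were a spinal group, then for all large~$n$ the companion data would satisfy $\omega_n = \phi_n^{X\setminus\{0\}} \circ \rho_n \circ \alpha_n \circ \widetilde\omega_n \circ \iota$ through a \emph{single} isomorphism $\iota\colon D \to \widetilde D$ and a \emph{single} relabelling $\alpha_n$ permuting the $X\setminus\{0\}$ coordinates. The point is that the defining data of~$G$, with $\omega^{(0)}$ supported on coordinates $\{1,2\}$ and $\omega^{(1)}$ supported on $\{0,2\}$, reflects two directed groups living over two different rays; a spinal description would force a single directed group whose companion data is supported over a \emph{single} collection of $m-1$ coordinates after one fixed relabelling. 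Quantitatively, the directed generator~$b$ acts non-trivially only on the subtrees hanging off~$\overline 0$ at first level, while~$c$ acts non-trivially only on those off~$\overline 1$; these supports cannot both be matched to the support $X\setminus\{0\}$ of the spinal companion data by one and the same automorphism~$\alpha_n$ of the coordinate set. I would quantify the ``size'' of the directed group, for instance via $\rank \widetilde D$ or the dimension of the companion homomorphism's image, to show that collapsing two independent rays into one overshoots what a rank-matched spinal~$\widetilde D$ can accommodate.

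The main obstacle I anticipate is the bookkeeping around the congruence in \cref{Lemma}{lem:cosets} and the freedom in the inner automorphisms $\phi_n$, $\rho_n$: one must be careful that the obstruction is genuinely about the \emph{incidence pattern} of which first-level vertices carry non-trivial sections, and not something that the relabelling~$\alpha_n$ or the inner automorphisms could absorb. Concretely, the subtle point is to verify that $f$, while it can permute the first-level subtrees and conjugate labels, cannot make the ray~$\overline 1$ along which~$c$ is directed coincide with the ray~$\overline 0$ along which~$b$ is directed after passage to sections; this is where \cref{Lemma}{lem:conjugating spinal elements} does the crucial work, since it shows the directed-ray of a conjugated $\Gamma$-element is determined independently of which directed element one starts with. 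Once that rigidity is in place, the existence of two distinct directed rays in~$G$ versus the single directed ray available in any spinal~$S$ yields the desired contradiction, and hence $G$ is not conjugate to any spinal group in $\Aut T_3$.
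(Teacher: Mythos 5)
There is a genuine gap: your central claimed contradiction --- that $b^f$ and $c^f$ would be directed along two distinct rays while a spinal group ``has only one directed ray'' --- is not a valid obstruction. A spinal group $S$ directed along $\overline 0$ contains directed elements along infinitely many pairwise divergent rays, namely every ray of the form $u\overline 0$ for a finite word $u$ (for instance conjugates of the directed generators by rooted elements). So there is nothing to prevent $f^{-1}(\overline 0)$ and $f^{-1}(\overline 1)$ from being two such eventually-zero rays, and ``$f$ is a bijection on rays'' yields no contradiction. Two further steps are unjustified as written: $b^f$ and $c^f$ are merely elements of $S$, not elements of $\widetilde D$ (nor generators), so you cannot feed them into \cref{Lemma}{lem:conjugating spinal elements} or into the companion data; and \cref{Theorem}{thm:spinal} cannot be ``applied directly,'' since its hypothesis requires \emph{both} groups to be spinal, whereas $G$ here is polyspinal with $r=2$.

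The paper's argument runs in the opposite direction and needs one ingredient your sketch lacks entirely: a generation count. One applies \cref{Lemma}{lem:descent} with $H=G$ reducing and the spinal group $\widetilde G=G^f$ as target, so that $D_{\sigma^n\omega}\subseteq(\langle a\rangle\cup\langle b\rangle\cup\langle c\rangle)^{f|_{0^n}}$. Taking sections at $0$ and using that directed elements of the spinal group lie in $\St(1)$ forces $f|_{0^n}(0)=1$ (since $c$'s only first-level section in $\St(1)$ sits over the vertex $1$), and then the same section computation applied to a conjugate of $b^j$ yields $(a^j)^{f|_{0^{n+1}}}\in\St(1)$, hence $j=0$. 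Thus the directed group of the spinal group is contained in a single conjugate of $\langle c\rangle$, so it is cyclic of order $3$ and $\widetilde G$ is $2$-generated; this contradicts $G/\St_G(1)'\cong C_3\wr C_3^{\,2}$, which shows $G$ needs three generators. Your intuition that the two rays $\overline 0$ and $\overline 1$ are the source of the obstruction is in the right neighbourhood --- it is exactly what forces the directed group to pick up only \emph{one} of $\langle b\rangle,\langle c\rangle$ --- but without the final rank/generation comparison the argument does not close, and with only the ray comparison it cannot close.
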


\begin{proof}
	Assume for contradiction that $G^f = \widetilde G=\langle R, D\rangle$ is a spinal group, for some $f\in\Aut T_3$. Clearly, the rooted group of~$\widetilde G$ must be cyclic of order~$3$. Following our usual strategy, by \cref{Lemma}{lem:descent} and \cref{Lemma}{lem:mes reducing} we find $n \in \N$ such that 
	\[
		 D_{\sigma^n \omega} \subseteq (\langle a \rangle \cup \langle b \rangle \cup \langle c \rangle)^{f|_{0^n }}.
	\]
	Since $D_{\sigma^n \omega}$ stabilises the first layer, we have $D_{\sigma^n \omega} \cap \langle a \rangle^{f|_{0^n }} = 1$. Let $d_{\sigma^n \omega} \in D_{\sigma^n \omega}$ equal  $(c^i)^{f|_{0^n }}$ for some $i \in \F_3$. Then, recalling that $c\in \St(1)$, we obtain 
	\[
		d_{\sigma^{n +1}\omega} = d_{\sigma^n \omega}|_0 = (c^i)^{f|_{0^n }}|_0 =(f|_{0^n })^{-1}|_{f|_{0^n}(0)}\,c^i\mid_{f|_{0^n}(0)}f|_{0^{n+1} }=(c^i\mid_{f|_{0^n}(0)})^{f|_{0^{n+1} }}.
	\]
As $d_{\sigma^{n +1}\omega}\in \St(1)$, it follows that $f|_{0^n}(0)=1$. Repeating the argument with some $e_{\sigma^n\omega}\in D_{\sigma^n\omega}$, which equals  $(b^j)^{f|_{0^n }}$ for some $j \in \F_3$, we see that
		\[
		e_{\sigma^{n +1}\omega} = e_{\sigma^n \omega}|_0 = (b^j)^{f|_{0^n }}|_0 =(b^j\mid_{1})^{f|_{0^{n+1} }}= (a^j)^{f|_{0^{n +1}}} \in \St(1),
	\]
	hence $j = 0$ and $e_{\sigma^n \omega} = 1$. It follows $D_{\sigma^n \omega} \leq \langle c \rangle^{f|_{0^n }}$. Thus $D \cong C_3$ and $\widetilde G$ is generated by two elements. But since $G/\St_G(1)' \cong C_3 \wr C_3^{\,2}$ the group $G$ cannot be two-generated. This is a contradiction.
\end{proof}

\begin{proof}[Proof of \cref{Theorem}{thm:non-spinal}]
	By \cref{Lemma}{lem:egs}, the group~$G$ is branch, torsion, and just infinite. If $S \leq \Aut T_3$ is isomorphic to $G$, it is conjugate to $G$ by \cite[Corollary 1(a)]{GrigorchukWilson} and \cite[Proof of Corollary 3.8]{KT}. 
	However by \cref{Lemma}{lem:EGS not conjugate to spinal}, this is impossible.
	
	If $S \leq \Aut \widetilde T$ is weakly branch (with respect to its embedding into $\Aut \widetilde T$) and isomorphic to $G$, it is just infinite and hence branch. Thus by \cite[Theorem 2]{GrigorchukWilson} we have $\widetilde T \cong T_3$ and the first assertion implies the second.
\end{proof}

We remark that all known spinal groups that are not weakly branch act on the binary tree such that the sequence of companion groups stabilises as the infinite dihedral group; compare \cite[Proposition~3.4]{NT}. Therefore we ask: ``Is every involution-free spinal group weakly branch with respect to its natural embedding?''

It is natural to update \cite[Question 4]{BGS03} to ``Does every finitely generated branch group admit an embedding into some $\Aut T$ as a branch polyspinal group?''. To conclude this paper, we will make some remarks concerning the candidate presented in \cite{BGS03}, which is the perfect regular branch group defined by Peter Neumann \cite{Neu86}. For a short description see \cite[Section 1.6.6]{BGS03}.

\begin{proposition}
	Neumann's group $G \leq \Aut T$ is not conjugate to any spinal group $S \leq \Aut T$.
\end{proposition}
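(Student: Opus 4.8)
The plan is to argue by contradiction, following the template of \cref{Lemma}{lem:EGS not conjugate to spinal}. Suppose there is $f \in \Aut T$ with $G^f = \widetilde G = \langle R, D \rangle$ a spinal group; by \cref{Remark}{rem:paths} we may take $\widetilde G$ to be directed along $\overline 0$. The structural input I would invoke about Neumann's group is that it is a finitely generated perfect regular branch group which is moreover contracting (see \cite{BGS03}), so that it is reducing with respect to a constant nuclear sequence with nucleus $\mathcal N$. Since $\St(1)$ is the kernel of the root-label homomorphism $\Aut T \to \Sym(X)$, it is normal in $\Aut T$; hence conjugation by $f$ carries $\St_G(1)$ onto $\St_{\widetilde G}(1)$ and induces an isomorphism $G/\St_G(1) \cong \widetilde G/\St_{\widetilde G}(1)$. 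As the directed group $D$ lies in $\St(1)$, the right-hand quotient is precisely the rooted group $R \leq \Sym(X)$, and, being a quotient of the perfect group $G$, it must be a \emph{perfect transitive} subgroup of $\Sym(X)$. This already pins down the admissible isomorphism types of $R$ in terms of the branching degree of Neumann's tree.

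I would then feed the conjugacy into the descent machinery. By \cref{Lemma}{lem:mes reducing} the spinal group $\widetilde G$ is reducing, and applying \cref{Lemma}{lem:descent} with $H = G$ yields a positive integer $N$ such that $\sigma^n D \subseteq \mathcal N^{\,f|_{0^n}}$ for all $n \geq N$. Because $\sigma^n D \leq \St(1)$ and $\St(1)$ is normal, a conjugate $w^{f|_{0^n}}$ of a nucleus element $w$ can lie in $\sigma^n D$ only if $w \in \St(1)$; as in the EGS computation, the rooted conjugates are discarded and the candidate directed generators are forced to be $f|_{0^n}$-conjugates of the genuinely directed-shaped members of $\mathcal N$.

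The heart of the argument, and the step I expect to be the main obstacle, is the ensuing section analysis. For a putative directed generator $w^{f|_{0^n}} \in \sigma^n D$ with $w \in \mathcal N \cap \St(1)$, I would compute the first-level section
\[
	(w^{f|_{0^n}})|_0 = \big(w|_{f|^{0^n}(0)}\big)^{f|_{0^{n+1}}}
\]
and use that it must again lie in $\sigma^{n+1} D \leq \St(1)$ to force the value of $f|^{0^n}(0)$, exactly as the relations $f|_{0^n}(0) = 1$ were forced in \cref{Lemma}{lem:EGS not conjugate to spinal}. Iterating down the ray $\overline 0$ and recording the rooted companions produced at the off-spine children, I would show that the directed group $D$ compatible with Neumann's nucleus is far too restricted to reproduce $G$. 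Concretely, under the homomorphism $\St_{\widetilde G}(1) \to (\sigma R)^{X}$ recording the top-level parts of the first-level sections, the surviving generators impose companion data that can only be realised by a \emph{non-perfect} spinal group, contradicting $\widetilde G \cong G$; dually, they force a generation bound on $\widetilde G$ that is violated by $G/\St_G(1)'$. Either way one obtains the desired contradiction. The delicate point is purely computational, namely determining which members of Neumann's explicit nucleus survive as first-layer stabilisers while carrying the perfect rooted companions that the spinal structure demands; once this bookkeeping is carried out the contradiction is immediate.
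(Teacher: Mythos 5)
There is a genuine gap: your outline stops exactly where the proof has to happen. After correctly reducing to $\sigma^n D \subseteq \mathcal N^{\,f|_{0^n}}$ via \cref{Lemma}{lem:mes reducing} and \cref{Lemma}{lem:descent}, and correctly observing that a conjugate $w^{f|_{0^n}}$ can lie in $\sigma^n D \leq \St(1)$ only if $w \in \St(1)$, you defer the decisive step to an unspecified ``purely computational'' analysis of ``which members of Neumann's explicit nucleus survive as first-layer stabilisers'', to be settled by an iterated section argument modelled on \cref{Lemma}{lem:EGS not conjugate to spinal}. That bookkeeping is never carried out, so no contradiction is actually derived; as written this is a plan, not a proof.

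Moreover, the plan misses the one structural fact that makes the argument terminate immediately: the nucleus of Neumann's group contains \emph{no} nontrivial element of $\St(1)$ at all. The generators $\hat a$ of Neumann's group have the form $\hat a = a(\dots,\hat a,\dots)$ with nontrivial root label $a$, and the nucleus consists of such elements; hence $\mathcal N^{\,f|_{0^n}} \cap \St(1)$ is trivial and $\sigma^n D$ would have to be trivial, contradicting the defining condition $\bigcap_{k \geq n}\ker(\omega_k)=1$ for a directed group. There are no ``directed-shaped members of $\mathcal N$'' to analyse, so the descent down the ray $\overline 0$, the discussion of perfect rooted companions, and the generation bound via $G/\St_G(1)'$ are all unnecessary --- and, since you do not supply the nucleus computation they would rest on, they cannot substitute for the missing fact. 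Your preliminary remark that $R \cong G/\St_G(1)$ must be a perfect transitive subgroup of $\Sym(X)$ is correct but plays no role in closing the argument.
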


\begin{proof}
	Assume for contradiction that $G^f$ is a spinal group with data $R, \omega$. Then
	by \cref{Lemma}{lem:descent} there is a number $n \in \N$ such that
	\[
		\sigma^n D \subseteq \mathcal N^{f|_{0^n}}.
	\]
	As the elements of $\sigma^n D$ stabilise the first layer, we arrive at a contradiction since $\mathcal{N}$ consists of elements that do not belong to $\St(1)$.
\end{proof}

By a rigidity result of Lavreniuk and Nekrashevych \cite[Section 8]{LN02} the automorphism group of Neumann's group $G$ coincides with its normaliser in $\Aut T$, but further results allowing us to reduce any isomorphism to a subgroup of $\Aut T$ (or even $\Aut \widetilde T$) would be necessary to negatively answer the updated question.

\end{document}